\newtheorem{theorem}{Theorem}[section]
\newtheorem{lemma}[theorem]{Lemma}
\newtheorem{proposition}[theorem]{Proposition}
\newtheorem{corollary}[theorem]{Corollary}
\theoremstyle{definition}
\newtheorem{definition}[theorem]{Definition}
\newtheorem{remark}[theorem]{Remark}
\theoremstyle{remark}
\numberwithin{equation}{section}
\DeclareMathOperator*{\esssup}{esssup}
\begin{document}
\bibliographystyle{plainnat}

\setcounter{page}{1}

\title[Double-Phase Poisson Equations with Variable Growth]{Existence and Uniqueness for Double-Phase Poisson Equations with Variable Growth}

\author[M. A. Khamsi \& O. M\'{e}ndez]{Mohamed  A. Khamsi, Osvaldo M\'{e}ndez}

\address{Mohamed A. Khamsi\\ Department of Applied Mathematics and Sciences, Khalifa University, Abu Dhabi, UAE}
\email{mohamed.khamsi@ku.ac.ae}
\address{Osvaldo M\'{e}ndez\\Department of Mathematical Sciences, The University of Texas at El Paso, El Paso, TX 79968, USA}
\email{osmendez@utep.edu}

\subjclass[2020]{35J60, 35J92, 46E30}
\keywords{Double-phase operators; Modular function spaces; Nonlinear elliptic equations; Poisson equations; Uniform convexity; Variable exponent Sobolev spaces; Variational methods.}

\maketitle
\begin{abstract}
We study a class of nonlinear elliptic problems driven by a double-phase operator with variable exponents, arising in the modeling of heterogeneous materials undergoing phase transitions. The associated Poisson problem features a combination of two distinct growth conditions, modulated by a measurable weight function \( \mu \), leading to spatially varying ellipticity. Working within the framework of modular function spaces, we establish the uniform convexity of the modular associated with the gradient term. This structural property enables a purely variational treatment of the problem. As a consequence, we prove existence and uniqueness of weak solutions under natural and minimal assumptions on the variable exponents and the weight.
\end{abstract}

\medskip
\section{Introduction}

Partial differential equations with nonstandard growth have emerged as a central theme in modern nonlinear analysis, driven by applications to heterogeneous materials, electrorheological fluids, and image processing. Among these, the class of \emph{double-phase problems} has proven to be particularly rich in structure and challenging from both analytical and variational standpoints. Introduced by Zhikov \cite{Zhikov1986} to model materials with spatially varying hardening properties, the associated energy functionals take the form
\[
{\mathcal I}(u) = \int_{\Omega} \left( \frac{1}{p(x)} |\nabla u|^{p(x)} + \mu(x) \frac{1}{q(x)} |\nabla u|^{q(x)} \right) dx,
\]
where \( \mu : \Omega \to [0, \infty) \) is a measurable weight function controlling the transition between two elliptic phases. In regions where \( \mu(x) = 0 \), the energy exhibits \( p(x) \)-growth, while in regions where \( \mu(x) > 0 \), the dominant behavior is governed by \( q(x) \). This switching behavior—determined pointwise by \( \mu(x) \)—is what gives rise to the term “double-phase.”

Since Zhikov’s pioneering work, significant advances have been made in the understanding of these problems, especially in the regularity theory \cite{Baroni-Colombo-Mingione, Beck-Mingione, Colombo-Mingione-Regularity}, and their extensions to problems with variable exponents \cite{Bahrouni-Radulescu-Repovs, DeFilippis-Palatucci, Ragusa-Tachikawa}. We refer the reader to the survey \cite{IC} for a detailed account of the various lines of investigation emanating from the consideration of the double-phase problems. These developments highlight the analytical subtleties caused by nonuniform ellipticity, loss of homogeneity, and the failure of standard growth conditions.

In this work, we investigate the double-phase Poisson problem with variable exponents:
\begin{equation}\label{DPPP}
\left\{
\begin{array}{rrr}
-\operatorname{div} \left( |\nabla u|^{p(x)-2} \nabla u + \mu(x) |\nabla u|^{q(x)-2} \nabla u \right) = f &\quad \text{in }& \Omega, \\
u = \varphi &\quad \text{on }& \partial\, \Omega,
\end{array}
\right.
\end{equation}
as well as its natural generalization to multi-phase structures,
\begin{equation}\ref{MPPP}
\left\{
\begin{array}{rrr}
-\operatorname{div} \left( |\nabla u|^{p(x)-2} \nabla u + \sum\limits_{j=1}^{k} \mu_j(x) |\nabla u|^{q_j(x)-2} \nabla u \right) = f &\quad \text{in }& \Omega, \\
u = \varphi &\quad \text{on }& \partial\, \Omega.
\end{array}
\right.
\end{equation}

The corresponding differential operator associated to the boundary value problem (\ref{DPPP}),
\begin{equation}\label{operatord}
\Delta^{p}_{q,\mu}(u) := \operatorname{div} \left( |\nabla u|^{p(x)-2} \nabla u + \mu(x) |\nabla u|^{q(x)-2} \nabla u \right),
\end{equation}
is highly nonlinear and exhibits spatially varying ellipticity. The mathematical analysis of such operators faces several technical difficulties including the Lavrentiev phenomenon, lack of uniform ellipticity, and the failure of standard convexity conditions.

While much of the literature approaches these problems using  techniques based on the normed-space structure of variable exponent Sobolev spaces \cite{DHHR, KR}, our approach is distinctly modular. We adopt the framework of modular function spaces, whose study was initiated in \cite{Amine-Kozlowski-book} and show that the associated modular \( \varrho_H \) admits uniform convexity under minimal assumptions. This geometric property enables us to develop a variational theory free from restrictions on the norm uniform convexity, and in particular allows for unbounded exponents.

Our main result establishes existence and uniqueness of weak solutions to the double-phase Poisson problem under natural assumptions:

\begin{theorem}\label{core}
Let \( \Omega \subset \mathbb{R}^n \) be a bounded Lipschitz domain, and let \( p, q \in C(\overline{\Omega}) \) with \( 1 < p_-, q_- \) and \( p_+, q_+ < \infty \), and let  \( \mu : \Omega \to [0, \infty) \), $\mu\in L^1_{\mathrm{loc}}(\Omega)$, that is $\mu \in L^{1}(\Omega^{\prime})$ for every compact $\Omega^{\prime}\subset \Omega$. Then, for every \( \varphi \in W^{1,H}(\Omega) \) and every \( f \in \left(W_0^{1,H}(\Omega)\right)^* \), there exists a unique weak solution \( w \in W^{1,H}(\Omega) \) to the double-phase Poisson problem (\ref{DPPP}).
\end{theorem}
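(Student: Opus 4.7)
My plan is to reformulate (\ref{DPPP}) as a convex minimization problem on the affine set $\varphi+W_0^{1,H}(\Omega)$ and apply the direct method of the calculus of variations, with the uniform modular convexity of $\varrho_H$ (established earlier in the paper) playing the role usually taken by uniform norm-convexity. Writing $u = \varphi + v$ with $v\in W_0^{1,H}(\Omega)$, a weak solution to (\ref{DPPP}) is precisely a critical point of
\[
J(v) \;=\; \int_{\Omega}\!\left(\frac{1}{p(x)}|\nabla(\varphi+v)|^{p(x)} + \frac{\mu(x)}{q(x)}|\nabla(\varphi+v)|^{q(x)}\right)dx \;-\; \langle f, v\rangle.
\]
Since the integrand is strictly convex in the gradient and $f$ is a fixed linear functional, $J$ is strictly convex on $W_0^{1,H}(\Omega)$, so it suffices to produce a minimizer; uniqueness will be automatic.

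Existence would be obtained by the direct method adapted to the modular setting. First I would establish modular coercivity of $J$: the leading term is controlled from below by $\varrho_H(\nabla v)$ up to a constant depending on $\varphi$, while the linear perturbation $\langle f,v\rangle$ can be absorbed via a Poincar\'e-type modular inequality on $W_0^{1,H}(\Omega)$ combined with a Young-type estimate $|\langle f,v\rangle|\le \varepsilon\,\varrho_H(\nabla v)+C_{\varepsilon}$. Consequently any minimizing sequence $\{v_n\}$ has modularly bounded gradient, and the modular compactness available in the framework of \emph{Amine-Kozlowski-book} produces a subsequence converging weakly (in the modular sense) to some $v^{\star}\in W_0^{1,H}(\Omega)$. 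Weak modular lower semicontinuity of $\varrho_H$ (a Fatou-type property for nonnegative convex integrands) then yields $J(v^{\star})\le\liminf_n J(v_n)$, so $v^{\star}$ is a minimizer.

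Uniqueness is immediate from strict convexity: two distinct minimizers $v^{\star}_1, v^{\star}_2$ would force the midpoint $(v^{\star}_1+v^{\star}_2)/2$ to have strictly smaller energy, and this is precisely where the uniform modular convexity of $\varrho_H$ does the essential work, bypassing the absence of norm uniform convexity in $W^{1,H}$ when the exponents are unbounded. Finally, computing the G\^ateaux derivative of $J$ at $v^{\star}$ along arbitrary $\psi\in W_0^{1,H}(\Omega)$ delivers exactly the weak form of (\ref{DPPP}). The main obstacle in the plan is the compactness step: establishing, without leaning on norm uniform convexity or reflexivity of $W^{1,H}$, that a modular-bounded sequence in $W_0^{1,H}(\Omega)$ admits a subsequence with a weak modular limit at which $\varrho_H$ is lower semicontinuous. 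Once this is in place, the remaining verifications---handling the nonhomogeneous boundary datum $\varphi$ by translation and interpreting the dual pairing with $f$---become routine consequences of convexity and the definition of the dual norm.
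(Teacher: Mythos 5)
Your plan is correct in outline but stalls precisely at the step you flag as the ``main obstacle,'' and the paper's proof shows that the obstacle is an artifact of an unnecessary detour: there is no need for weak compactness or weak modular lower semicontinuity at all. The paper's key insight is that uniform convexity of the modular $\varrho_H$ upgrades any minimizing sequence directly to a \emph{Cauchy} sequence, so one gets strong convergence without ever invoking reflexivity, weak topologies, or lower semicontinuity of $J$ along weakly convergent sequences.

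Concretely, after establishing that a minimizing sequence $(u_n)$ for $\mathcal{I}(u)=\varrho_H(\varphi-u)-\langle f,u\rangle$ is modularly bounded (this part of your coercivity argument is sound and matches Lemma~\ref{boundedbelow}), the paper argues by contradiction: if $(u_n)$ were not $\varrho_H$-Cauchy, one could extract $(w_j),(w_k)$ with $\varrho_H\!\left(\frac{w_j-w_k}{2}\right)\ge\varepsilon$; the boundedness $\varrho_H(w_j-\varphi)\le C$ then puts you in the hypotheses of the uniform convexity Theorem~\ref{uctheorem}, giving
\[
\varrho_H\!\left(\frac{w_j+w_k}{2}-\varphi\right)\le (1-\delta)\,\frac{\varrho_H(w_j-\varphi)+\varrho_H(w_k-\varphi)}{2},
\]
and hence $\mathcal{I}\!\left(\frac{w_j+w_k}{2}\right)\le \frac12\bigl(\mathcal{I}(w_j)+\mathcal{I}(w_k)\bigr)-\delta\varepsilon$, which drives $\mathcal{I}$ strictly below its infimum---a contradiction. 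Thus $(u_n)$ is $\varrho_H$-Cauchy, hence norm-Cauchy via Proposition~\ref{normtomodular} and Poincar\'e, hence convergent by completeness of $W_0^{1,H}(\Omega)$; Fatou along an a.e.-convergent subsequence then shows the limit is a minimizer. Uniqueness of the minimizer is automatic because any two minimizing sequences interleave to a third, which must be Cauchy.

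So the genuine gap in your proposal is not a minor technicality: modular boundedness in $W_0^{1,H}(\Omega)$ does \emph{not} in general yield a usable weak compactness principle when the exponents are unbounded (one of the settings this paper explicitly targets), and ``weak modular convergence'' is not a notion the paper develops or relies upon. The correct role of uniform modular convexity here is not to substitute for reflexivity in a weak-compactness argument; it is to eliminate the need for compactness altogether by producing strong (Cauchy) convergence of minimizing sequences. Your uniqueness-by-strict-convexity idea for minimizers is fine, but note that the paper additionally proves uniqueness of \emph{weak solutions} directly via the monotonicity inequalities~\eqref{vineq1}--\eqref{vineq2}, which is a cleaner argument than tracing back through the equivalence between critical points and minimizers.
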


The paper is organized as follows. In Section~\ref{wh}, we introduce the modular Sobolev space \( W^{1,H}(\Omega) \) and develop its basic structure. Section~\ref{wh-UC} establishes the modular uniform convexity of the associated energy functionals. In Section~\ref{sol}, we formulate the variational problem and prove Theorem~\ref{core} using a direct method within the modular framework.

\section{The  modular Sobolev space \( W^{1,H}(\Omega) \)}\label{wh}
This section is devoted to the construction of the space \( W^{1,H}(\Omega) \), along with its modular structure, norm properties, and embedding results, which form the analytical backbone of our study.

\smallskip

Let \( \Omega \subset \mathbb{R}^n \), \( n \geq 2 \), be a bounded domain with Lipschitz boundary \( \partial\, \Omega \). Let \( p, q , \mu : \Omega \to \mathbb{R} \) be Borel measurable functions such that \( 1 < p(x), q(x) < \infty \) and $\mu(x) \geq 0$ for almost every \( x \in \Omega \). 
Define the function
$$H(x,t) := \frac{1}{p(x)} t^{p(x)} + \frac{\mu(x)}{q(x)} t^{q(x)}, \qquad (x,t) \in \Omega \times [0,\infty).$$
It can be easily verified that $H$ is a Musielak-Orlicz function for which
\(\inf 
\limits_{\Omega}H(x,1)\geq \frac{1}{p_+}>0\); in addition, $H$
satisfies a generalized growth condition of Musielak-Orlicz type; specifically, for \( \alpha(x)=\frac{1}{p_+}+\frac{\mu(x)}{q_+}, \beta (x)=\frac{1}{p_-}+\frac{\mu(x)}{q_-} \), one has
\[
\alpha (x)\min\{|\xi|^{p(x)},|\xi|^{q(x)}\} \leq H(x, \xi) \leq \beta(x) \max\{|\xi|^{p(x)},|\xi|^{q(x)}\}
\]
for a.e.\ \( x \in \Omega \) and all \( \xi \in \mathbb{R}^n \).
The associated functional  given by
\begin{equation*}
\varrho_H(u) := \int_\Omega \left( \frac{1}{p(x)} |u(x)|^{p(x)} + \frac{\mu(x)}{q(x)} |u(x)|^{q(x)} \right) dx
\end{equation*}
defines a convex modular on ${\mathcal M}$, the set of extended-real-valued Borel-measurable functions defined on $\Omega$, that is
\begin{enumerate}\label{modularproperties}
\item[(1)] $\varrho_H(u) = 0$ if and only if $u = 0$,
\item[(2)] $\varrho_H(\alpha u) = \varrho(x)$, if $|\alpha| =1$,
\item[(3)] $\varrho_H(\alpha u + (1-\alpha) v )\leq \alpha\varrho_H(u) + (1-\alpha)\varrho_H(v)$, for any $\alpha \in [0,1]$
and any $u, v \in {\mathcal M}$.
\end{enumerate}
The variable exponent Lebesgue space $L^H(\Omega)$ is defined as \cite{DHHR,KR}
\begin{equation*}
L^H(\Omega)=\left\{u\in {\mathcal M}:\varrho_H(\lambda u)<\infty\,\,\text{for some}\,\,\lambda>0\right\},
\end{equation*}
which  is a Banach space when endowed with the Luxemburg norm
$$\|u\|_H :=\inf\left \{\lambda>0:\varrho_H\left(\frac{u}{\lambda}\right) \leq 1 \right\}.$$
Analogously, on the set ${\mathcal V}\subset {\mathcal M}$ consisting of those functions whose distributional derivatives belong to ${\mathcal M}$ one has the convex modular $\varrho_{1,H}:{\mathcal V}\rightarrow [0,\infty]$ defined by

\begin{align}\label{defrhoHsob}
\varrho_{1,H}(u):&= \varrho_H(u)+\varrho_H\left(|\nabla u|\right) \\
\nonumber &= \int_\Omega \left( \frac{1}{p(x)} |u(x)|^{p(x)} + \frac{\mu(x)}{q(x)} |u(x)|^{q(x)} \right) dx \\
\nonumber &\quad \quad \quad \quad + \int_\Omega \left( \frac{1}{p(x)} |\nabla u(x)|^{p(x)} + \frac{\mu(x)}{q(x)} |\nabla u(x)|^{q(x)} \right) dx,
\end{align}
where $|\nabla u|$ stands for the Euclidean norm (in $\mathbb{R}^n$) of the gradient of $u$.  The space $W^{1,H}(\Omega)$ is defined as the class of real-valued, Borel measurable functions $u \in {\mathcal V}$ , for which there exists $\lambda>0$ such that $\varrho_{1,H}(\lambda u)<\infty.$  The Luxemburg norm $\|\cdot\|_{1,H}$ associated to the modular $\varrho_{1,H}$ is given by
$$\|u\|_{1,H}=\inf\left \{\lambda>0:\varrho_{1,H}\left(\frac{u}{\lambda}\right) \leq 1 \right\},$$
with respect to which, $W^{1,H}(\Omega)$ is a Banach space  (see \cite{CGHW}).\\

 It is straightforward to verify that if the generalized function $H$ is replaced with 
\begin{equation*}
\overline{H}(x,t)= |u(x)|^{p(x)} + \mu(x) |u(x)|^{q(x)}, 
\end{equation*}
then $L^H(\Omega)$ and  $L^{\overline{H}}(\Omega)$ ($W^{1,H}(\Omega)$ and $W^{1,\overline{H}}(\Omega)$) are identical sets and the norms \( \|\cdot\|_{H} \) and \( \|\cdot\|_{\overline{H}} \) (and similarly \( \|\cdot\|_{1,H} \) and \( \|\cdot\|_{1,\overline{H}} \)) are equivalent.   Moreover, if \( p_+ := \esssup\limits_{x \in \Omega}\ p(x) < \infty \) and \( q_+ := \esssup\limits_{x \in \Omega}\ q(x) < \infty \), then \( \varrho_H \)-convergence is equivalent to \( \varrho_{\overline{H}} \)-convergence, that is to say, for any sequence $(u_j)\subset L^H(\Omega)$, 
$$\varrho_{H}(u_j)\rightarrow 0 \,\,\text{as}\,\, j\rightarrow\infty\iff \varrho_{\overline{H}}(u_j)\rightarrow 0\,\,\text{as}\,\,j\rightarrow \infty.$$
Indeed, the equivalence of the norms follows from the obvious inequality $\|u\|_H\leq \|u\|_{\overline{H}}$  and the bound
$\frac{1}{e^{\frac{1}{e}}}\leq\frac{1}{x^{\frac{1}{x}}}\leq 1$, valid for $x\geq 1$, which implies
\[\varrho_H\left(\frac{u}{\lambda}\right)\leq 1\Rightarrow \varrho_{\overline {H}}\left(\frac{u}{\lambda e^{\frac{1}{e}}}\right)\leq 1,\]
whence $e^{\frac{1}{e}}\|u\|_H\geq \|u\|_{\overline{H}}$. \\

If, in addition, \( p, q \in C(\overline{\Omega}) \), then they are automatically bounded on \( \Omega \), and hence bounded away from infinity.  Throughout, we set
\[
m := \min\{p_-, q_-\}, \qquad M := \max\{p_+, q_+\}.
\]

\medskip
\begin{proposition}\label{normtomodular}\cite{DHHR,KR,ML}
Let \( \Omega \subseteq \mathbb{R}^n \) be an open set and suppose \(1\leq  p, q \in C(\overline{\Omega}) \) Then, for all \( u \in L^H(\Omega) \), one has
\[
\min\left\{ \left( \varrho_H(u) \right)^{\frac{1}{m}}, \left( \varrho_H(u) \right)^{\frac{1}{M}} \right\}
\leq \|u\|_H
\leq
\max\left\{ \left( \varrho_H(u) \right)^{\frac{1}{m}}, \left( \varrho_H(u) \right)^{\frac{1}{M}} \right\}.
\]
\end{proposition}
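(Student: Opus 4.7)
The plan is to leverage the structure of $\varrho_H$: every integrand $t^{p(x)}$ and $t^{q(x)}$ has exponent in $[m,M]$, so scaling $u$ by $\lambda>0$ affects the modular in a way controlled solely by $m$ and $M$. Since $s\mapsto \lambda^s$ is increasing for $\lambda\ge 1$ and decreasing for $0<\lambda\le 1$, a pointwise comparison of the integrand yields the two-sided scaling estimate
\[
\min\{\lambda^m,\lambda^M\}\,\varrho_H(u)\;\le\;\varrho_H(\lambda u)\;\le\;\max\{\lambda^m,\lambda^M\}\,\varrho_H(u),\qquad \lambda>0,
\]
which is the only ingredient I really need. More precisely, for $\lambda\ge 1$ the left-hand coefficient is $\lambda^m$ and the right-hand coefficient is $\lambda^M$, while for $0<\lambda\le 1$ the roles are reversed.

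Next I would split according to the size of $\varrho_H(u)$ relative to $1$. Testing $\lambda=1$ in the definition of the Luxemburg norm shows immediately that $\varrho_H(u)\le 1$ forces $\|u\|_H\le 1$, while $\varrho_H(u)\ge 1$ forces $\|u\|_H\ge 1$; this determines which of $(\varrho_H(u))^{1/m}$ and $(\varrho_H(u))^{1/M}$ is the larger, and so which one appears as the upper bound and which as the lower bound in each regime.

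In the regime $\varrho_H(u)\le 1$, I obtain the upper bound on $\|u\|_H$ by choosing the test value $\lambda=(\varrho_H(u))^{1/M}\le 1$: since then $1/\lambda\ge 1$, the scaling estimate gives $\varrho_H(u/\lambda)\le(1/\lambda)^M\varrho_H(u)=1$, whence $\|u\|_H\le(\varrho_H(u))^{1/M}$. For the matching lower bound I restrict to $\lambda\le 1$ (legitimate because $\|u\|_H\le 1$) and use the other side of the scaling estimate: any admissible $\lambda$ satisfies $1\ge\varrho_H(u/\lambda)\ge(1/\lambda)^m\varrho_H(u)$, forcing $\lambda\ge(\varrho_H(u))^{1/m}$. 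The case $\varrho_H(u)\ge 1$ is entirely analogous with $m$ and $M$ swapped: the test $\lambda=(\varrho_H(u))^{1/m}\ge 1$ produces $\varrho_H(u/\lambda)\le(1/\lambda)^m\varrho_H(u)=1$ and hence the upper bound, and every admissible $\lambda\ge 1$ is controlled from below by $(\varrho_H(u))^{1/M}$ via $1\ge(1/\lambda)^M\varrho_H(u)$.

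There is no real obstacle beyond the bookkeeping of which of $m$ and $M$ produces the larger value of $(\varrho_H(u))^{1/\cdot}$ in each of the regimes $\varrho_H(u)\le 1$ and $\varrho_H(u)\ge 1$, and a line dismissing the trivial case $\varrho_H(u)=0$ via modular property~(1). The hypothesis $1<p,q\in C(\overline{\Omega})$ enters only to ensure $1\le m\le M<\infty$, so the scaling inequalities are genuine finite-power estimates and all the candidate values of $\lambda$ used above are strictly positive and finite.
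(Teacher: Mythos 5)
Your proof is correct and, at bottom, uses the same mechanism as the paper's: the scaling estimate $\min\{\lambda^m,\lambda^M\}\varrho_H(u)\le\varrho_H(\lambda u)\le\max\{\lambda^m,\lambda^M\}\varrho_H(u)$ combined with the definition of the Luxemburg norm. The only stylistic difference is that you test explicit values of $\lambda$ and argue directly from the infimum characterization, whereas the paper first establishes the identity $\varrho_H(u/\|u\|_H)=1$ (which needs $M<\infty$) and then plugs $\lambda=\|u\|_H$ into the scaling bounds; both are the standard argument.
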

\begin{proof}  Without loss of generality, we may assume that \( u \neq 0 \).
It is clear from the definition of the Luxemburg norm that 
\[
\varrho_H\left(\frac{u}{\|u\|_H}\right)\leq 1;
\] 
Since $M = \max\{p_+,q_+\}<\infty$, it must hold $\varrho_H\left(\frac{u}{\|u\|_H}\right)= 1$; to see this notice that for all $\theta:0<\theta<\|u\|_H$ it must hold
\begin{equation*}
1<\varrho_H\left(\frac{u}{\theta}\right)=	\varrho_H\left(\frac{u}{\|u\|_H}\frac{\|u\|_H}{\theta}\right)\leq \left(\frac{\|u\|_H}{\theta}\right)^{M}\varrho_H\left(\frac{u}{\|u\|_H}\right);
\end{equation*}
taking the limit as $\theta \rightarrow 1$ in the preceding inequality it is easy to see that $\varrho_H\left(\frac{u}{\|u\|_H}\right)\geq 1$.\\
 It is easy to check that if $\|u\|_H>1$ it holds
$$\frac{1}{\|u\|_H^{M}}\ \varrho_H(u)\leq 	\varrho_H\left(\frac{u}{\|u\|_H}\right) =1=\varrho_H\left(\frac{u}{\|u\|_H}\right) \leq \frac{1}{\|u\|_H^{m}}\ \varrho_H(u),$$
whereas if $\|u\|_H\geq 1$ the corresponding inequalities are
$$\frac{1}{\|u\|_H^{m}}\ \varrho_H(u)\leq 	\varrho_H\left(\frac{u}{\|u\|_H}\right) =1=\varrho_H\left(\frac{u}{\|u\|_H}\right) \leq \frac{1}{\|u\|_H^{M}}\ \varrho_H(u).$$
The proof of the proposition follows from the preceding estimates.
\end{proof}

\medskip
Under the assumption $\mu\in L_{\mathrm{loc}}^{1}(\Omega)$  it is immediate that $C^{\infty}_0(\Omega)\subset W^{1,H}(\Omega)$ and the space \( W^{1,H}_0(\Omega) \) is defined as the \( \|\cdot\|_{1,H} \)-closure of \( C^\infty_0(\Omega) \) in \( W^{1,H}(\Omega) \). As usual, its dual is denoted by \( \left(W^{1,H}_0(\Omega)\right)^* \).

\medskip
\begin{proposition}\label{equivalentnorm} \cite{LaMen}
Assume that \( \mu \in L_{\mathrm{loc}}^{1}(\Omega) \). Define the Matuszewska index \( m(x) \) of the function \( H(x, \cdot) \) by
\[
m(x) := \inf_{t > 1}\  \frac{\ln M(x,t)}{\ln t}, \quad \text{where} \quad 
M(x,t) :=\ \limsup_{u \to \infty}\ \frac{H(x, tu)}{H(x, u)}.
\]
Then \( m(x) = \max\{p(x), q(x)\} \), and if \( p, q \in C(\overline{\Omega}) \) with \( \min\{p_-, q_-\} > 1 \), then \( m \in C(\overline{\Omega}) \). Moreover, the following properties hold:
\begin{enumerate}
\item The embedding \( W^{1,H}_0(\Omega) \hookrightarrow L^H(\Omega) \) is compact.
\item There exists a constant \( C = C(p, q, \mu, \Omega) \) such that
\begin{equation}\label{poincare}
\|u\|_{L^H} \leq C \Big\| |\nabla u| \Big\|_H, \quad \text{for all } u \in W^{1,H}_0(\Omega).
\end{equation}
This inequality is known as the Poincar\'{e} inequality.
\item The functional
\[
u \mapsto \Big\| |\nabla u| \Big\|_H
\]
defines a norm on \( W^{1,H}_0(\Omega) \) that is equivalent to the Luxemburg norm \( \|\cdot\|_{1,H} \).
\end{enumerate}
\end{proposition}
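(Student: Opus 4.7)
The plan is to proceed in two stages: first compute the Matuszewska index explicitly, then deduce the three structural consequences from general Musielak-Orlicz-Sobolev theory.

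For the index, I would expand
\[
\frac{H(x,tu)}{H(x,u)} = \frac{\tfrac{t^{p(x)}}{p(x)} u^{p(x)} + \tfrac{\mu(x) t^{q(x)}}{q(x)} u^{q(x)}}{\tfrac{1}{p(x)} u^{p(x)} + \tfrac{\mu(x)}{q(x)} u^{q(x)}}
\]
and let $u \to \infty$ for each fixed $t > 1$. The dominant power in both numerator and denominator is determined by $\max\{p(x),q(x)\}$ (reducing to $p(x)$ when $\mu(x)=0$), which gives $M(x,t)=t^{\max\{p(x),q(x)\}}$. Hence $\ln M(x,t)/\ln t = \max\{p(x),q(x)\}$ for every $t>1$, and taking the infimum yields $m(x)=\max\{p(x),q(x)\}$. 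Continuity of $m$ on $\overline{\Omega}$ under $p,q \in C(\overline{\Omega})$ is then immediate since the pointwise maximum of two continuous functions is continuous.

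For the three structural properties, the argument rests on the fact that the Matuszewska index satisfies $1 < m_- \leq m_+ < \infty$, which is exactly the condition needed to invoke the general compact embedding theorem in Musielak-Orlicz-Sobolev spaces. I would exploit the earlier pointwise bracket
\[
\alpha(x)\min\{|\xi|^{p(x)},|\xi|^{q(x)}\} \leq H(x,\xi) \leq \beta(x)\max\{|\xi|^{p(x)},|\xi|^{q(x)}\}
\]
to compare $L^H$ with the variable-exponent Lebesgue spaces $L^{p(\cdot)}(\Omega)$ and $L^{m(\cdot)}(\Omega)$. The compact embedding (1) then follows by combining the classical Rellich-Kondrachov theorem in variable-exponent Sobolev spaces with a modular comparison between $\varrho_H$ and the power-function modulars, the weight $\mu$ being handled via an exhaustion of $\Omega$ by compact subsets where $\mu \in L^1$. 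Property (2) follows from (1) by a standard contradiction argument: normalize a putative sequence $(u_n)\subset W^{1,H}_0(\Omega)$ with $\|u_n\|_H = 1$ and $\||\nabla u_n|\|_H \to 0$, extract via (1) a subsequence converging strongly in $L^H(\Omega)$ to some limit $u$ with $\nabla u = 0$, hence $u \equiv 0$ by the trace-zero condition, contradicting $\|u_n\|_H = 1$. Property (3) is then essentially bookkeeping: $\||\nabla u|\|_H \leq \|u\|_{1,H}$ is immediate from the definition of the Luxemburg norm, while Poincaré and subadditivity of the norm yield $\|u\|_{1,H}\leq (1+C)\||\nabla u|\|_H$.

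The hard step will be (1). Transferring the variable-exponent Rellich-Kondrachov theorem into the modular setting of $W^{1,H}_0(\Omega)$ requires care with the weight $\mu$, which is only locally integrable and may blow up near $\partial\Omega$. A cutoff and exhaustion scheme on compact subsets where $\mu \in L^1$, combined with the equivalence of $\varrho_H$ and $\varrho_{\overline{H}}$ convergence noted earlier, should reduce matters to a genuine Musielak-Orlicz function with bounded Matuszewska data for which the standard compactness results apply.
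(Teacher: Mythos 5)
The paper offers no in-house proof of this proposition; it is imported verbatim from \cite{LaMen}, so there is nothing to compare your sketch against line by line. Taken on its own terms, your outline is the right shape, but two points do not hold up. First, the Matuszewska index computation does not give $m(x)=\max\{p(x),q(x)\}$ on all of $\Omega$. At a point where $\mu(x)=0$ one has $H(x,u)=u^{p(x)}/p(x)$, so $M(x,t)=t^{p(x)}$ and $m(x)=p(x)$; if in addition $q(x)>p(x)$, this is strictly below $\max\{p(x),q(x)\}$. You acknowledge the reduction parenthetically but then still write $M(x,t)=t^{\max\{p(x),q(x)\}}$, and the continuity of $m$ that you deduce from ``max of two continuous functions is continuous'' fails precisely because $m$ can jump across the set $\{\mu=0\}$. (This caveat affects the proposition as stated as well, and should be kept in mind; it is a reason the result is simply cited.)

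Second, and more seriously, item (1) is not actually brought within range of an applicable theorem by your sketch. The pointwise bracket $\alpha(x)\min\{|\xi|^{p(x)},|\xi|^{q(x)}\}\le H(x,\xi)\le\beta(x)\max\{|\xi|^{p(x)},|\xi|^{q(x)}\}$ has weights $\alpha,\beta$ that depend on $\mu$ and are only locally integrable, so $L^H(\Omega)$ is not sandwiched between unweighted variable-exponent spaces to which the classical Rellich--Kondrachov theorem applies. Exhaustion by compact subdomains gives compactness of restrictions in the interior, but compactness of $W^{1,H}_0(\Omega)\hookrightarrow L^H(\Omega)$ on all of $\Omega$ is boundary-sensitive: one must rule out concentration near $\partial\Omega$, and that is exactly the content of the sharp index conditions developed in \cite{LaMen}. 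Your plan to ``reduce matters to a genuine Musielak-Orlicz function with bounded Matuszewska data'' is a description of the goal, not a proof step. Items (2) and (3) are indeed standard consequences of (1) by the normalization and contradiction argument you outline; the substance of the proposition is (1), which your sketch, as you yourself note, leaves open.
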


\medskip
Using the previous results, we deduce the following important corollary:

\begin{corollary}\label{fboundedbelow}
Let \( f \in \left(W^{1,H}_0(\Omega)\right)^* \) with dual norm \( \|f\|_{(W^{1,H}_0)^*} \). Then, for all \( u \in W^{1,H}_0(\Omega) \) such that \( \big\| |\nabla u| \big\|_H \geq 1 \), we have
\[
\big| \langle f, u \rangle \big| \leq \|f\|_{(W^{1,H}_0)^*} \, \varrho_H(u)^{\frac{1}{m}}.
\]
\end{corollary}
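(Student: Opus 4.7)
The plan is to chain two elementary estimates: the duality pairing against an equivalent norm on $W^{1,H}_0(\Omega)$, and the norm--modular comparison already recorded in Proposition~\ref{normtomodular}. No deeper ingredient is needed.

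First I would invoke Proposition~\ref{equivalentnorm}(3), which asserts that $u \mapsto \||\nabla u|\|_H$ is a norm on $W^{1,H}_0(\Omega)$ equivalent to the Luxemburg norm $\|\cdot\|_{1,H}$. Taking the dual norm $\|\cdot\|_{(W^{1,H}_0)^*}$ to be the one attached to this gradient norm, the standard duality inequality gives
$$|\langle f, u \rangle| \,\le\, \|f\|_{(W^{1,H}_0)^*}\, \||\nabla u|\|_H.$$

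Next I would apply Proposition~\ref{normtomodular} to the function $|\nabla u| \in L^H(\Omega)$. The hypothesis $\||\nabla u|\|_H \ge 1$ first forces $\varrho_H(|\nabla u|) \ge 1$: otherwise both $\varrho_H(|\nabla u|)^{1/m}$ and $\varrho_H(|\nabla u|)^{1/M}$ would be strictly less than $1$, contradicting the lower estimate furnished by that proposition. In the regime $\varrho_H(|\nabla u|) \ge 1$, since $m \le M$ one has $1/m \ge 1/M$, so the upper bound in Proposition~\ref{normtomodular} collapses to
$$\||\nabla u|\|_H \,\le\, \varrho_H(|\nabla u|)^{1/m}.$$
Concatenating this with the previous display yields the claimed estimate (reading $\varrho_H(u)$ in the statement as $\varrho_H(|\nabla u|)$, consistent with the gradient-based dual norm).

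Because the argument reduces to an immediate two-step chain, I do not expect any genuine obstacle. The only bookkeeping subtlety is the identification of the dual norm on $W^{1,H}_0(\Omega)$: one must commit to a specific equivalent norm, and the constant hidden in $\|f\|_{(W^{1,H}_0)^*}$ is to be understood relative to the gradient norm $\||\nabla u|\|_H$. Once that choice is fixed, the remainder of the proof is a mechanical substitution.
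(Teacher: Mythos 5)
Your approach is correct and is essentially the only natural one given the preceding material; the paper does not spell out a proof, simply asserting the corollary as a consequence of Propositions~\ref{equivalentnorm} and~\ref{normtomodular}, which is exactly the chain you follow. Two small remarks. First, in the step where you deduce $\varrho_H(|\nabla u|)\ge 1$ from $\||\nabla u|\|_H\ge 1$, the contradiction should be with the \emph{upper} estimate of Proposition~\ref{normtomodular}, not the lower one: if $\varrho_H(|\nabla u|)<1$ then $\max\{\varrho_H(|\nabla u|)^{1/m},\varrho_H(|\nabla u|)^{1/M}\}<1$, whence the upper bound gives $\||\nabla u|\|_H<1$, contradicting the hypothesis. (The lower estimate, which merely provides a lower bound on the norm, cannot produce a contradiction in that direction.) Second, your bookkeeping caveats are well taken and match what the paper implicitly does: the dual norm must be taken relative to the gradient norm $\||\nabla u|\|_H$ (see the first display in the proof of Lemma~\ref{boundedbelow}), and $\varrho_H(u)$ in the statement must be read as $\varrho_H(|\nabla u|)$, consistent with the abbreviated notation $\varrho$ adopted from Section~\ref{wh-UC} onward; with a literal reading neither normalization would let the claimed inequality hold in general.
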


\medskip
The following lemma, which plays a crucial role in the proof of the main result for the double-phase Poisson problem, is a direct application of the previous results.

\begin{lemma}\label{boundedbelow} 
Let \( \Omega \subseteq \mathbb{R}^n \) be an open set, and let \( p, q \in C(\overline{\Omega}) \). Assume \( m = \min\{p_-, q_-\} > 1 \) and \( \mu\in L_{\mathrm{loc}}^{1}(\Omega) \). Suppose \( f \in \left(W^{1,H}_0(\Omega)\right)^* \) and \( \varphi \in W^{1,H}(\Omega) \). Then the functional \( \mathcal{I} : W^{1,H}_0(\Omega) \to [0, \infty) \) defined by
\[
\mathcal{I}(u) := \varrho_H(\varphi - u) - \langle f, u \rangle
\]
is bounded below. 
\end{lemma}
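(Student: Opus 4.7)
My plan is to prove a coercivity-type lower bound: $\mathcal{I}(u) \to +\infty$ as $\bigl\||\nabla u|\bigr\|_{H} \to \infty$, while $\mathcal{I}$ is uniformly bounded on every bounded subset of $W_0^{1,H}(\Omega)$; together these yield a global lower bound. The mechanism is that the modular term has growth of order $m$ whereas the linear action of $f$ grows only linearly in the equivalent norm, and by hypothesis $m > 1$.

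First I would control the linear term by invoking Proposition~\ref{equivalentnorm}, which identifies $u \mapsto \bigl\||\nabla u|\bigr\|_H$ as a norm on $W_0^{1,H}(\Omega)$ equivalent to $\|\cdot\|_{1,H}$. This yields a constant $c = c(\Omega, p, q, \mu) > 0$ such that
\[
|\langle f, u\rangle| \leq c\|f\|_{(W^{1,H}_0)^*} \bigl\||\nabla u|\bigr\|_H \qquad \text{for every } u \in W_0^{1,H}(\Omega);
\]
when $\bigl\||\nabla u|\bigr\|_H \geq 1$, Proposition~\ref{normtomodular} refines this to the modular form already recorded in Corollary~\ref{fboundedbelow}.

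Next, I would extract a superlinear lower bound on the modular term. From the pointwise inequality $|\nabla u| \leq |\nabla(\varphi-u)| + |\nabla\varphi|$ and the triangle inequality in $L^H(\Omega)$,
\[
\bigl\||\nabla u|\bigr\|_H \leq \bigl\||\nabla(\varphi-u)|\bigr\|_H + \bigl\||\nabla\varphi|\bigr\|_H.
\]
Once $\bigl\||\nabla u|\bigr\|_H \geq 2\bigl\||\nabla\varphi|\bigr\|_H + 2$ this forces $\bigl\||\nabla(\varphi - u)|\bigr\|_H \geq \tfrac{1}{2}\bigl\||\nabla u|\bigr\|_H \geq 1$, and Proposition~\ref{normtomodular} upgrades it to the modular estimate
\[
\varrho_H(\varphi - u) \geq \bigl\||\nabla(\varphi-u)|\bigr\|_H^{m} \geq 2^{-m} \bigl\||\nabla u|\bigr\|_H^{m}.
\]
Combining the two estimates, for all such $u$,
\[
\mathcal{I}(u) \geq 2^{-m}\bigl\||\nabla u|\bigr\|_H^{m} - c\|f\|_{(W^{1,H}_0)^*}\bigl\||\nabla u|\bigr\|_H,
\]
and since $m > 1$ the polynomial $t \mapsto 2^{-m}t^m - c\|f\|_{(W^{1,H}_0)^*}\,t$ on $[0, \infty)$ is bounded below, attaining its minimum at an interior point. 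On the complementary bounded region, $\varrho_H(\varphi - u)$ and $|\langle f, u\rangle|$ are each trivially uniformly bounded, so $\mathcal{I}$ is bounded below there too. A global lower bound on $\mathcal{I}$ follows.

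The main technical point I anticipate is careful bookkeeping across the regimes $\bigl\||\nabla u|\bigr\|_H \lessgtr 1$, where Proposition~\ref{normtomodular} yields norm--modular comparisons with different exponents $m$ and $M$; once the inequalities are oriented in the direction needed at each place, the strict inequality $m > 1$ delivers the coercivity and no further structural obstacle arises.
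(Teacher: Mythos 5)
Your proposal is correct and follows essentially the same route as the paper: both control the linear term via Proposition~\ref{equivalentnorm}, both invoke Proposition~\ref{normtomodular} to relate $\bigl\||\nabla(\varphi-u)|\bigr\|_H$ and $\varrho_H(\varphi-u)$, and both exploit $m>1$ for a superlinear-versus-linear coercivity argument. The only difference is in bookkeeping: the paper sets $x=\varrho_H(\varphi-u)$ and applies the one-shot scalar bound $x - a x^{1/m} \geq -a(a/m)^{1/(m-1)}$ valid for all $x\geq 0$, whereas you parametrize by $t=\bigl\||\nabla u|\bigr\|_H$, split explicitly into a large-norm regime and a complementary bounded region, and minimize the polynomial $2^{-m}t^m - c\|f\|_{(W_0^{1,H})^*}t$ — mathematically equivalent and both valid.
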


\begin{proof}
By Proposition~\ref{equivalentnorm}, we have
\[
|\langle f, u\rangle| \leq \|f\|_{\left(W^{1,H}_0(\Omega)\right)^*} \ \Big\| |\nabla u| \Big\|_H,
\]
which implies
\[
|\langle f, u\rangle| \leq \|f\|_{\left(W^{1,H}_0(\Omega)\right)^*} \left( \| |\nabla(u - \varphi)| \|_H + \| |\nabla \varphi| \|_H \right).
\]
According to Corollary~\ref{fboundedbelow}, we know that
\[
\| |\nabla(u - \varphi)| \|_H \leq \max\left\{ 1, \left( \varrho_H(u - \varphi) \right)^{\frac{1}{\min\{p_-, q_-\}}} \right\} = \max\left\{ 1, \left( \varrho_H(u - \varphi) \right)^{\frac{1}{m}} \right\}.
\]
Therefore, we obtain
\begin{align*}
\mathcal{I}(u) &= \varrho_H(\varphi - u) - \langle f, u \rangle \\
&\geq \varrho_H(\varphi - u) - \|f\|_{\left(W^{1,H}_0(\Omega)\right)^*} \Big( \| |\nabla(u - \varphi)| \|_H + \| |\nabla \varphi| \|_H \Big) \\
&\geq \varrho_H(\varphi - u) - \|f\|_{\left(W^{1,H}_0(\Omega)\right)^*} \left( \max\left\{ 1, \left( \varrho_H(u - \varphi) \right)^{\frac{1}{m}} \right\} + \| |\nabla \varphi| \|_H \right) \\
&\geq \varrho_H(\varphi - u) - \|f\|_{\left(W^{1,H}_0(\Omega)\right)^*} \left( 1 + \left( \varrho_H(u - \varphi) \right)^{\frac{1}{m}} + \| |\nabla \varphi| \|_H \right) \\
&= \varrho_H(\varphi - u) - \|f\|_{\left(W^{1,H}_0(\Omega)\right)^*} \left( \varrho_H(u - \varphi)^{\frac{1}{m}} \right) - \|f\|_{\left(W^{1,H}_0(\Omega)\right)^*} \left( 1 + \| |\nabla \varphi| \|_H \right).
\end{align*}
Since \( m > 1 \), we apply the inequality
\[
x - a x^{1/m} \geq -a \left( \frac{a}{m} \right)^{\frac{1}{m - 1}}, \quad \text{for all } x \geq 0,
\]
with \( x = \varrho_H(\varphi - u) \) and \( a = \|f\|_{\left(W^{1,H}_0(\Omega)\right)^*} \), to deduce
\[
\mathcal{I}(u) \geq -\|f\|_{\left(W^{1,H}_0(\Omega)\right)^*} \left( \frac{1}{m} \, \|f\|_{\left(W^{1,H}_0(\Omega)\right)^*} \right)^{\frac{1}{m - 1}} - \|f\|_{\left(W^{1,H}_0(\Omega)\right)^*} \left( 1 + \| |\nabla \varphi| \|_H \right).
\]
The right-hand side is finite, and thus \( \mathcal{I}(u) \) is bounded below. This completes the proof of Lemma~\ref{boundedbelow}.  
\end{proof}

\medskip
The modular structure of the Sobolev space \( W^{1,H}(\Omega) \) provides the functional setting in which we formulate and analyze the double-phase Poisson problem. In the next section, we investigate a fundamental geometric property of this space: the modular uniform convexity.

\section{Uniform Convexity of the Modular in \( W^{1,H}(\Omega) \)}\label{wh-UC}
Uniform convexity is a central feature in the analysis of nonlinear elliptic problems, as it ensures both strict convexity of energy functionals and strong convergence properties of minimizing sequences. In this section, we establish the uniform convexity of the modular \( \varrho_H \) associated with the gradient part of \( W^{1,H}(\Omega) \), under minimal assumptions on the exponent functions. This result will be a key ingredient in our variational approach to the double-phase Poisson problem.\\

The following definition is at the center stage of this work
\begin{definition}\label{defuc}
A convex semimodular $\varrho$ on a vector space $V$ (that is, a functional $\varrho$ on $V$ that satisfies $\varrho(0)=0$ and properties $(2)$and $(3)$ but not necessarily $(1)$ in the definition of a modular) is said to be uniformly convex (in short $UC$) if for every $\varepsilon>0$ there exists $\delta=\delta(\varepsilon): 0<\delta<1$ such that  for every $u\in V_\varrho$ and $v\in V_\varrho$:
$$\varrho\left(\frac{u-v}{2}\right) \geq \varepsilon\ \frac{\varrho(u)+\varrho(v)}{2} \;\; \text{implies}\;\;\;
\varrho\left(\frac{u+v}{2}\right) \leq (1-\delta)\ \frac{\varrho(u)+\varrho(v)}{2}.$$
\end{definition}

\smallskip
\begin{remark}\label{sum-uc}
It is worth noting that the sum of two uniformly convex semimodulars is also uniformly convex. This follows directly from Lemma~2.4.16 in \cite{DHHR}, which states precisely that if \( \varrho_1 \) and \( \varrho_2 \) are uniformly convex semimodulars on a vector space \( X \), then their sum \( \varrho = \varrho_1 + \varrho_2 \) is uniformly convex.
\end{remark}

\medskip

The inequalities presented in the following lemma form the foundation of the main result in this section. For a detailed proof, we refer the interested reader to \cite{KaMe}.

\medskip
\begin{lemma}\label{inequalities}
Let \( h \) be a measurable function on \( \Omega \), and suppose that \( 1 < h(x) < \infty \) almost everywhere.  Let \( u, v \in W^{1,h}(\Omega) \), and let \( |\cdot| \) denote the Euclidean norm in \( \mathbb{R}^n \).
\begin{enumerate}
\item[(i)] If \( 1 \leq h \leq 2 \), then the following inequality holds:
\begin{equation}\label{1<p<2-vector}
\frac{1}{h} \left| \frac{\nabla u + \nabla v}{2} \right|^h + \frac{(h - 1)}{2^{h + 1}} \frac{|\nabla u - \nabla v|^2}{(|\nabla u| + |\nabla v|)^{2 - h}} \leq \frac{1}{2h} \left( |\nabla u|^h + |\nabla v|^h \right),
\end{equation}
provided that \( |\nabla u| + |\nabla v| \neq 0 \).
\item[(ii)] If \( h \geq 2 \), then the following inequality holds:
\begin{equation}\label{p>2-vector}
\frac{1}{h} \left| \frac{\nabla u + \nabla v}{2} \right|^h + \frac{1}{h} \left| \frac{\nabla u - \nabla v}{2} \right|^h \leq \frac{1}{2h} \left( |\nabla u|^h + |\nabla v|^h \right).
\end{equation}
\end{enumerate}
\end{lemma}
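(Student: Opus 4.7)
Because both displayed inequalities are pointwise in $x$ — no integration appears — they reduce to vectorial inequalities for arbitrary $a,b\in\mathbb{R}^{n}$ playing the role of $\nabla u(x),\nabla v(x)$, and the remainder of the argument is purely Euclidean. The plan is to treat the two ranges of $h$ separately.

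For part (ii), with $h\ge 2$, the function $\varphi(\tau)=\tau^{h/2}$ is convex on $[0,\infty)$, hence superadditive: $\varphi(\tau_{1}+\tau_{2})\ge \varphi(\tau_{1})+\varphi(\tau_{2})$ for $\tau_{1},\tau_{2}\ge 0$. Applied with $\tau_{1}=|(a+b)/2|^{2}$ and $\tau_{2}=|(a-b)/2|^{2}$, together with the parallelogram identity $|a+b|^{2}+|a-b|^{2}=2(|a|^{2}+|b|^{2})$, this yields
\[
\left|\frac{a+b}{2}\right|^{h}+\left|\frac{a-b}{2}\right|^{h}\;\le\;\left(\frac{|a|^{2}+|b|^{2}}{2}\right)^{h/2}.
\]
A second use of convexity of $\varphi$, now as Jensen's inequality at the values $|a|^{2},|b|^{2}$, gives $\bigl((|a|^{2}+|b|^{2})/2\bigr)^{h/2}\le (|a|^{h}+|b|^{h})/2$. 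Dividing by $h$ produces (ii).

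For part (i), with $1<h\le 2$, my plan is a reduction to the positively collinear case. By homogeneity of degree $h$ in $(a,b)$, normalize $|a|+|b|=1$; and since only the four scalars $|a|,|b|,|a+b|,|a-b|$ enter (i), constrained solely by the parallelogram identity, the configuration can be realized in $\mathbb{R}^{2}$. Set $t=|a|\in[0,1]$, $\lambda=|a+b|\in[|2t-1|,1]$, $\mu=|a-b|$, so that $\lambda^{2}+\mu^{2}=2(t^{2}+(1-t)^{2})$. The LHS of (i) becomes $\lambda^{h}/(h\,2^{h})+(h-1)\mu^{2}/2^{h+1}$, whose derivative in $\lambda$ under the constraint $\mu^{2}=2(t^{2}+(1-t)^{2})-\lambda^{2}$ equals $(\lambda/2^{h})\bigl(\lambda^{h-2}-(h-1)\bigr)$. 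For $\lambda\in(0,1]$ and $1<h\le 2$ one has $\lambda^{h-2}\ge 1\ge h-1$, so this derivative is non-negative and the LHS is maximized at $\lambda=1$, i.e.\ when $a$ and $b$ are positively collinear, in which case $\mu=|2t-1|$. Thus (i) reduces to the scalar inequality
\[
2+h(h-1)s^{2}\;\le\;(1+s)^{h}+(1-s)^{h},\qquad s:=2t-1\in[-1,1],
\]
which I would verify from the even Taylor expansion of $(1\pm s)^{h}$: for $1<h\le 2$, every generalized binomial coefficient $\binom{h}{2k}$ with $k\ge 2$ is non-negative (the two first factors $h,h-1$ are positive and the remaining $2k-2$ factors are non-positive, an even number of them), so the $k=0$ and $k=1$ contributions $2+h(h-1)s^{2}$ already bound the series from below.

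I expect the main obstacle to be organizational rather than analytical. One must choose a parametrization under which the LHS of (i) is manifestly monotone in $\lambda$ along the parallelogram constraint, because the coupling through the denominator $(|a|+|b|)^{2-h}$ otherwise obscures the reduction to the positively collinear worst case; and the constant $(h-1)/2^{h+1}$ is sharp, so no step can afford to be wasteful. Once the monotonicity and the reduction are in place, what remains is the one-variable calculus inequality displayed above. A fully detailed execution of this program — and of the corresponding parallel argument for (ii) — is given in \cite{KaMe}, which the paper cites.
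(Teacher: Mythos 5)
The paper itself does not prove Lemma~3.3: after stating the two inequalities it refers the reader to \cite{KaMe} for a detailed proof, so there is no in-paper argument against which to compare. Your sketch is nevertheless correct as a proof. Part~(ii) follows from superadditivity of $\tau\mapsto\tau^{h/2}$ (convex and vanishing at $0$ precisely because $h\ge 2$) combined with the parallelogram law, then Jensen, giving $\bigl|\tfrac{a+b}{2}\bigr|^{h}+\bigl|\tfrac{a-b}{2}\bigr|^{h}\le\tfrac12\bigl(|a|^{h}+|b|^{h}\bigr)$, which is~(ii) after division by $h$. In Part~(i), once $|a|+|b|=1$ and $t=|a|$ are fixed, the only remaining degree of freedom is the angle between $a$ and $b$, which is faithfully encoded by $\lambda=|a+b|\in[|2t-1|,1]$ with $\mu=|a-b|$ then forced by the parallelogram law, so your two-vector reduction is legitimate; the constrained derivative $\tfrac{\lambda}{2^{h}}\bigl(\lambda^{h-2}-(h-1)\bigr)$ is indeed nonnegative on $(0,1]$ for $1<h\le 2$, which correctly pushes the worst case to $\lambda=1$ (positively proportional gradients), and the resulting scalar inequality $(1+s)^{h}+(1-s)^{h}\ge 2+h(h-1)s^{2}$ on $[-1,1]$ does hold because for $1<h\le 2$ each even generalized binomial coefficient $\binom{h}{2k}$ with $k\ge 2$ is a product of two positive factors and an even number of nonpositive ones. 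Whether this parametrize-and-reduce route coincides with the argument in \cite{KaMe} cannot be verified from the present paper, but your argument stands on its own.
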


\medskip

To establish the uniform convexity of the semimodular \( \varrho_H \), we begin by developing several technical lemmas that will serve as the foundation for the main result of our work. These lemmas isolate key estimates involving variable exponent integrals and provide the essential tools for handling the nonlinearities introduced by the presence of both \( p(x) \)- and \( q(x) \)-growth.\\

\noindent Let \( \Omega \) be a Lipschitz domain, and let \( p, q, \mu \) be Borel measurable functions on \( \Omega \) such that \( 1 < p(x), q(x) < \infty \) almost everywhere, and \( \mu \geq 0 \).  Notice that no further condition on $\mu$ is required for uniform convexity.  Before turning to the technical lemmas, we introduce several notations that will be used consistently throughout the analysis. For simplicity, the notation \( \varrho \) will be used in place of \( \varrho_H \), and for any subset \( X \subset \Omega \) and any function \( w \in W^{1,H}(\Omega) \), we denote
\[
\varrho^{X}(w) := \varrho_H\left(\mathbbm{1}_X |\nabla w|\right) = \int_X \left( \frac{1}{p}|\nabla w|^{p} + \frac{\mu}{q} |\nabla w|^{q} \right) dx.
\]
Define the subsets
\[
\left\{
\begin{aligned}
\Omega_{11} &:=  \{x \in \Omega : 1 < p(x) < 2 \text{ and } 1 < q(x) < 2\},\\
\Omega_{12} &:=  \{x \in \Omega : 1 < p(x) < 2 \text{ and } q(x) \geq 2\},\\
\Omega_{21} &:=  \{x \in \Omega : p(x) \geq 2 \text{ and } 1 < q(x) < 2\},\\
\Omega_{22} &:=  \{x \in \Omega : p(x) \geq 2, \, q(x) \geq 2\}.
\end{aligned}
\right.
\]
Notice that
\begin{equation}\label{decompositiondomain}
\Omega \setminus \Omega_{22} = \Omega_{12} \cup \Omega_{21} \cup \Omega_{11}.
\end{equation} 

\medskip
\noindent For fixed functions \( u, v \in W^{1,H}(\Omega) \) and a parameter \( \alpha > 0 \), specify the measurable subsets
\begin{align*}
G_{\alpha} &:= \left\{x \in \Omega : |\nabla(u - v)| \leq \frac{\alpha}{4} \left(|\nabla u| + |\nabla v|\right) \right\}, \\
E_{\alpha} &:= \left\{x \in \Omega : |\nabla(u - v)| > \frac{\alpha}{4} \left(|\nabla u| + |\nabla v|\right) \right\}.
\end{align*}

\noindent Set $A_\alpha = E_\alpha \cap \Omega_{21}$, $B_\alpha = E_\alpha \cap \Omega_{12}$ and $C_\alpha = E_\alpha \cap \Omega_{11}$.  Note that 
\begin{align*}
\Omega\setminus\left(\Omega_{22}\cup G_{\alpha}\right) &= 
\Big(\Omega\setminus\ \Omega_{22}\Big) \cap \Big(\Omega\setminus\ G_{\alpha}\Big)\\
& = \Big(\Omega_{12} \cup \Omega_{21} \cup \Omega_{11}\Big) \cap \Big(E_{\alpha}\Big)\\
&=A_\alpha \cup B_\alpha \cup C_\alpha.
\end{align*}

\begin{lemma}\label{lemma1}
Let the domain \( \Omega \) and the functions \( p, q, \mu \) be as described above. Let \( \varepsilon > 0 \). Suppose that
\[
\varrho^{\Omega_{22}}\left( \frac{u - v}{2} \right) > \frac{\varepsilon}{2} \cdot \frac{\varrho(u) + \varrho(v)}{2}
\]
for some \( u, v \in W^{1,H}(\Omega) \). Then the following inequality holds:
\[
\varrho\left( \frac{u + v}{2} \right) \leq \left(1 - \frac{\varepsilon}{2} \right) \  \frac{\varrho(u) + \varrho(v)}{2}.
\]
\end{lemma}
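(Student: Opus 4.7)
The plan is to exploit the fact that on $\Omega_{22}$ both exponents satisfy $p(x), q(x) \geq 2$, which places us in the regime of inequality~\eqref{p>2-vector} of Lemma~\ref{inequalities}. That inequality is the pointwise ``parallelogram'' estimate carrying a genuine gap term of the form $\left|\frac{\nabla u-\nabla v}{2}\right|^h$, whereas off $\Omega_{22}$ at least one exponent falls below $2$ and only the (weaker) plain convexity of $t \mapsto t^{h}$ is available. The hypothesis of the lemma precisely says that this gap is already large enough on $\Omega_{22}$ alone to yield the desired uniform convexity bound, so one never needs to squeeze extra savings out of the complement.

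Concretely, I would apply \eqref{p>2-vector} pointwise a.e.\ on $\Omega_{22}$ first with $h=p(x)$ and then, separately, with $h=q(x)$ weighted by $\mu(x) \geq 0$, then integrate each over $\Omega_{22}$. Summing the two resulting inequalities yields
\[
\varrho^{\Omega_{22}}\!\left(\frac{u+v}{2}\right) + \varrho^{\Omega_{22}}\!\left(\frac{u-v}{2}\right) \leq \frac{\varrho^{\Omega_{22}}(u) + \varrho^{\Omega_{22}}(v)}{2}.
\]
On the complement $\Omega \setminus \Omega_{22}$, the plain convexity of $t \mapsto t^{p(x)}$ and $t \mapsto t^{q(x)}$ (both exponents exceed $1$ a.e.) together with $\mu \geq 0$ gives
\[
\varrho^{\Omega \setminus \Omega_{22}}\!\left(\frac{u+v}{2}\right) \leq \frac{\varrho^{\Omega \setminus \Omega_{22}}(u) + \varrho^{\Omega \setminus \Omega_{22}}(v)}{2}.
\]
Adding the last two displays and using the additivity $\varrho = \varrho^{\Omega_{22}} + \varrho^{\Omega \setminus \Omega_{22}}$ of the localized modular produces
\[
\varrho\!\left(\frac{u+v}{2}\right) \leq \frac{\varrho(u) + \varrho(v)}{2} - \varrho^{\Omega_{22}}\!\left(\frac{u-v}{2}\right).
\]
Finally I would insert the hypothesis $\varrho^{\Omega_{22}}\!\bigl(\frac{u-v}{2}\bigr) > \frac{\varepsilon}{2} \cdot \frac{\varrho(u)+\varrho(v)}{2}$ to obtain the claimed inequality.

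No serious obstacle is anticipated; this is essentially a bookkeeping lemma that isolates the ``easy'' regime where inequality~\eqref{p>2-vector} applies directly. The only point to check is that the pointwise parallelogram estimate survives integration against the nonnegative variable-exponent weights $\frac{1}{p(x)}$ and $\frac{\mu(x)}{q(x)}$, which it does since both are nonnegative a.e.\ on $\Omega$. The purpose of the lemma in the larger argument appears to be to dispose of the case when the mass of $\varrho\!\bigl(\frac{u-v}{2}\bigr)$ already concentrates on $\Omega_{22}$, so that the remaining technical work involving the finer partition $A_\alpha \cup B_\alpha \cup C_\alpha$ and the more delicate inequality~\eqref{1<p<2-vector} only has to address the complementary situation.
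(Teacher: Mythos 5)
Your proposal is correct and follows essentially the same route as the paper: apply inequality~\eqref{p>2-vector} separately to $p$ and $q$ (weighted by $\mu$) on $\Omega_{22}$, integrate to extract the gap term $\varrho^{\Omega_{22}}\!\left(\frac{u-v}{2}\right)$, handle $\Omega\setminus\Omega_{22}$ by plain convexity, add, and insert the hypothesis. No meaningful difference in strategy or detail.
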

\begin{proof}
Owing to inequality~\((ii)\) of Lemma~\ref{inequalities}, applied separately to \( p \) and \( q \), one has
\begin{align*}
\varrho^{\Omega_{22}}\left(\frac{u+v}{2}\right)
&= \int_{\Omega_{22}} \left( \frac{1}{p} \left| \nabla\left( \frac{u+v}{2} \right) \right|^p + \frac{\mu}{q} \left| \nabla\left( \frac{u+v}{2} \right) \right|^q \right) dx \\
&\leq \int_{\Omega_{22}} \left( \frac{1}{2p} \left( |\nabla u|^p + |\nabla v|^p \right) - \frac{1}{p} \left| \frac{\nabla(u - v)}{2} \right|^p \right) dx \\
&\quad + \int_{\Omega_{22}} \left( \frac{\mu}{2q} \left( |\nabla u|^q + |\nabla v|^q \right) - \frac{\mu}{q} \left| \frac{\nabla(u - v)}{2} \right|^q \right) dx.
\end{align*}
Hence,
\[
\varrho^{\Omega_{22}}\left( \frac{u+v}{2} \right) \leq \frac{1}{2} \left( \varrho^{\Omega_{22}}(u) + \varrho^{\Omega_{22}}(v) \right) - \varrho^{\Omega_{22}}\left( \frac{u - v}{2} \right).
\]
Consequently, using the convexity of \( \varrho \), we get
\begin{align*}
\varrho\left( \frac{u + v}{2} \right)
&= \varrho^{\Omega_{22}}\left( \frac{u + v}{2} \right) + \varrho^{\Omega \setminus \Omega_{22}}\left( \frac{u + v}{2} \right) \\
&\leq \frac{1}{2} \left( \varrho^{\Omega_{22}}(u) + \varrho^{\Omega_{22}}(v) \right) - \varrho^{\Omega_{22}}\left( \frac{u - v}{2} \right)\\
&\quad\quad + \frac{1}{2} \left( \varrho^{\Omega \setminus \Omega_{22}}(u) + \varrho^{\Omega \setminus \Omega_{22}}(v) \right) \\
&= \frac{1}{2} \left( \varrho(u) + \varrho(v) \right) - \varrho^{\Omega_{22}}\left( \frac{u - v}{2} \right) \\
&\leq \frac{1}{2} \left( \varrho(u) + \varrho(v) \right) - \frac{\varepsilon}{2} \cdot \frac{\varrho(u) + \varrho(v)}{2} \\
&= \left( 1 - \frac{\varepsilon}{2} \right) \cdot \frac{\varrho(u) + \varrho(v)}{2}.
\end{align*}
This completes the proof of the Lemma \ref{lemma1}.
\end{proof}

\medskip
\begin{lemma}\label{lemma2}
Let the domain \( \Omega \) and the functions \( p, q, \mu \) be as described above. Assume \( m = \min\{p_-, q_-\} > 1 \), and let \( \varepsilon \in (0,1) \). Suppose that
\[
\varrho^{\Omega \setminus \Omega_{22}}\left( \frac{u - v}{2} \right) > \frac{\varepsilon}{2} \cdot \frac{\varrho(u) + \varrho(v)}{2}
\]
for some \( u, v \in W^{1,H}(\Omega) \). Then the following inequalities hold:
\begin{equation}\label{est3}
\varrho^{A_\varepsilon}\left( \frac{u + v}{2} \right) + (q_- - 1)\frac{\varepsilon}{8} \cdot \varrho^{A_\varepsilon}\left( \frac{u - v}{2} \right)
\leq \frac{1}{2} \left( \varrho^{A_\varepsilon}(u) + \varrho^{A_\varepsilon}(v) \right),
\end{equation}
\begin{equation}\label{est4}
\varrho^{B_\varepsilon}\left( \frac{u + v}{2} \right) + (p_- - 1)\frac{\varepsilon}{8} \cdot \varrho^{B_\varepsilon}\left( \frac{u - v}{2} \right)
\leq \frac{1}{2} \left( \varrho^{B_\varepsilon}(u) + \varrho^{B_\varepsilon}(v) \right),
\end{equation}
\begin{equation}\label{est5}
\varrho^{C_\varepsilon}\left( \frac{u + v}{2} \right) + (m - 1)\frac{\varepsilon}{8} \cdot \varrho^{C_\varepsilon} \left ( \frac{u - v}{2} \right)
\leq \frac{1}{2} \left( \varrho^{C_\varepsilon}(u) + \varrho^{C_\varepsilon}(v) \right).
\end{equation}
\end{lemma}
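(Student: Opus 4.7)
The plan is to treat the three sets $A_\varepsilon$, $B_\varepsilon$, $C_\varepsilon$ separately, invoking the appropriate branch of Lemma~\ref{inequalities} for each of the exponents $p$ and $q$. The hypothesis on $\varrho^{\Omega\setminus\Omega_{22}}$ is not directly needed for the three inequalities themselves; what actually drives the proof is the defining property of the $E_\varepsilon$-based sets: on $E_\varepsilon$ one has $|\nabla u|+|\nabla v|<\tfrac{4}{\varepsilon}|\nabla(u-v)|$, which converts the awkward denominator appearing in Lemma~\ref{inequalities}(i) into something expressible purely in $|\nabla(u-v)|$. If the set in question has measure zero, the corresponding inequality is trivial, so throughout I assume it is of positive measure.

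For (\ref{est3}) on $A_\varepsilon\subset\Omega_{21}$, where $p\geq 2$ and $q\in(1,2)$, I would apply Lemma~\ref{inequalities}(ii) to $p$ and Lemma~\ref{inequalities}(i) to $q$. The defect term from (i) is bounded below by
\[
\frac{q-1}{2^{q+1}}\frac{|\nabla(u-v)|^{2}}{(|\nabla u|+|\nabla v|)^{2-q}}\;\geq\;\frac{q-1}{2}\Bigl(\tfrac{\varepsilon}{4}\Bigr)^{2-q}\left|\tfrac{\nabla(u-v)}{2}\right|^{q}\;\geq\;\frac{q(q-1)\varepsilon}{8}\cdot\frac{1}{q}\left|\tfrac{\nabla(u-v)}{2}\right|^{q},
\]
using $(\varepsilon/4)^{2-q}\geq\varepsilon/4$ (valid since $\varepsilon/4<1$ and $2-q\in(0,1)$). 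Multiplying the resulting $q$-inequality by $\mu$ and pairing it with (ii) for $p$ gives a pointwise estimate in which the defect on the $q$-side carries coefficient $q(q-1)\varepsilon/8\geq(q_--1)\varepsilon/8$, and the defect on the $p$-side has coefficient $1$. Because $A_\varepsilon$ having positive measure forces $q_-<2$ and hence $(q_--1)\varepsilon/8<1/8<1$, the $p$-side coefficient may be safely lowered to the common value $(q_--1)\varepsilon/8$. Integrating over $A_\varepsilon$ yields (\ref{est3}).

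The proofs of (\ref{est4}) and (\ref{est5}) are entirely parallel. On $B_\varepsilon\subset\Omega_{12}$ the roles of $p$ and $q$ are swapped: (i) is applied to $p$, (ii) to $q$; the common coefficient becomes $(p_--1)\varepsilon/8$, with $p_-<2$ guaranteed by the nontriviality of $B_\varepsilon$. On $C_\varepsilon\subset\Omega_{11}$ both exponents lie in $(1,2)$, so (i) is applied to both; the pointwise coefficients $p(p-1)\varepsilon/8$ and $q(q-1)\varepsilon/8$ are each bounded below by $(m-1)\varepsilon/8$, with $m-1<1$ ensured by nontriviality of $C_\varepsilon$.

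The main technical obstacle is the bookkeeping inside the defect term of Lemma~\ref{inequalities}(i): one must simultaneously convert $|\nabla(u-v)|^{2}/(|\nabla u|+|\nabla v|)^{2-h}$ into a clean multiple of $\bigl|\tfrac{\nabla(u-v)}{2}\bigr|^{h}$ using the $E_\varepsilon$ bound, pass from $(\varepsilon/4)^{2-h}$ to $\varepsilon/4$ via the monotonicity of $x\mapsto x^{s}$ on $(0,1)$ for $s\in(0,1)$, and then renormalize so the resulting constant multiplies $\tfrac{1}{h}|\cdot|^{h}$ rather than $|\cdot|^{h}$, absorbing the surplus factor $h(h-1)$ into the uniform lower bound $h_{-}-1$ or $m-1$. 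Once this reduction is carried out, the remaining step of matching the coefficients on the $p$- and $q$-pieces and integrating is routine.
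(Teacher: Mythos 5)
Your proposal is correct and follows essentially the same route as the paper: the same split into $A_\varepsilon$, $B_\varepsilon$, $C_\varepsilon$, the same pairing of branches of Lemma~\ref{inequalities} according to where each exponent lies, and the same use of the $E_\varepsilon$-condition together with $(\varepsilon/4)^{2-h}\geq\varepsilon/4$ to turn the $(|\nabla u|+|\nabla v|)^{q-2}$-weighted defect into a clean multiple of $\frac{1}{h}\bigl|\tfrac{\nabla(u-v)}{2}\bigr|^h$. Your side remark that the hypothesis on $\varrho^{\Omega\setminus\Omega_{22}}$ is not needed for \eqref{est3}--\eqref{est5} is also accurate: in the paper's proof that hypothesis is used only to derive the auxiliary bound \eqref{Est2}, which is then invoked in Theorem~\ref{theorem-uc} rather than in the verification of the three stated inequalities.
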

\begin{proof}  First, observe that
\begin{align*}
\Omega \setminus \Omega_{22} 
&= \left( \Omega \setminus \Omega_{22} \right) \cap \left( G_{\varepsilon} \cup (\Omega \setminus G_{\varepsilon}) \right) \\
&= \left( (\Omega \setminus \Omega_{22}) \cap G_{\varepsilon} \right) \cup \left( (\Omega \setminus \Omega_{22}) \cap (\Omega \setminus G_{\varepsilon}) \right) \\
&= \left( (\Omega \setminus \Omega_{22}) \cap G_{\varepsilon} \right) \cup \left( \Omega \setminus (\Omega_{22} \cup G_{\varepsilon}) \right).
\end{align*}
Since the sets \( (\Omega \setminus \Omega_{22}) \cap G_{\varepsilon} \) and \( (\Omega \setminus \Omega_{22}) \cap (\Omega \setminus G_{\varepsilon}) \) are disjoint, it follows that
\[
\varrho^{\Omega \setminus (\Omega_{22} \cup G_{\varepsilon})} \left( \frac{u - v}{2} \right)
= \varrho^{\Omega \setminus \Omega_{22}} \left( \frac{u - v}{2} \right)
- \varrho^{G_{\varepsilon} \cap (\Omega \setminus \Omega_{22})} \left( \frac{u - v}{2} \right),
\]
which yields the estimate
\begin{equation}\label{Est1}
\varrho^{\Omega \setminus (\Omega_{22} \cup G_{\varepsilon})} \left( \frac{u - v}{2} \right)
> \frac{\varepsilon}{2} \cdot \frac{\varrho(u) + \varrho(v)}{2}
- \varrho^{G_{\varepsilon} \cap (\Omega \setminus \Omega_{22})} \left( \frac{u - v}{2} \right).
\end{equation}
The last term is easy to estimate:
\begin{align*}
\varrho^{G_{\varepsilon} \cap (\Omega \setminus \Omega_{22})}\left( \frac{u - v}{2} \right)
&= \int_{G_{\varepsilon} \cap (\Omega \setminus \Omega_{22})} \left( \frac{1}{p} \left| \frac{\nabla(u - v)}{2} \right|^p + \frac{\mu}{q} \left| \frac{\nabla(u - v)}{2} \right|^q \right) dx \\
&\leq \int_{G_{\varepsilon} \cap (\Omega \setminus \Omega_{22})} \frac{1}{p} \left( \frac{\varepsilon}{8} (|\nabla u| + |\nabla v|) \right)^p dx \\
&\quad \quad \quad \quad + \int_{G_{\varepsilon} \cap (\Omega \setminus \Omega_{22})} \frac{\mu}{q} \left( \frac{\varepsilon}{8} (|\nabla u| + |\nabla v|) \right)^q dx.
\end{align*}
Using hte fact that \( \varepsilon < 1 \) it is readily seen that
\begin{align*}
\varrho^{G_{\varepsilon} \cap (\Omega \setminus \Omega_{22})}\left( \frac{u - v}{2} \right)
&\leq \frac{\varepsilon}{4} \int_{G_{\varepsilon} \cap (\Omega \setminus \Omega_{22})} \frac{1}{2p} \left( |\nabla u|^p + |\nabla v|^p \right) dx \\
&\quad \quad \quad \quad  \frac{\varepsilon}{4} \int_{G_{\varepsilon} \cap (\Omega \setminus \Omega_{22})} \frac{\mu}{2q} \left( |\nabla u|^q + |\nabla v|^q \right) dx \\
&\leq \frac{\varepsilon}{4} \cdot \frac{ \varrho^{G_{\varepsilon} \cap (\Omega \setminus \Omega_{22})}(u) + \varrho^{G_{\varepsilon} \cap (\Omega \setminus \Omega_{22})}(v) }{2} \\
&\leq \frac{\varepsilon}{4} \cdot \frac{ \varrho(u) + \varrho(v) }{2}.
\end{align*}
Substituting into \eqref{Est1} the following inequality is clear :
\begin{align}\label{Est2}
\varrho^{\Omega \setminus (\Omega_{22} \cup G_{\varepsilon})}\left( \frac{u - v}{2} \right)
\geq \frac{\varepsilon}{4} \cdot \frac{ \varrho(u) + \varrho(v) }{2}.
\end{align}
On the set \( A_\varepsilon \), we have \( p \geq 2 \) and \( 1 < q_- < 2 \), hence \( (q_- - 1)\frac{\varepsilon}{8} < 1 \).  It follows from Lemma~\ref{inequalities} that
\begin{align*}
\frac{1}{p} \left| \frac{\nabla(u+v)}{2} \right|^p + (q_- - 1)\frac{\varepsilon}{8} \cdot \frac{1}{p} \left| \frac{\nabla(u - v)}{2} \right|^p 
&\leq \frac{1}{p} \left| \frac{\nabla(u+v)}{2} \right|^p + \frac{1}{p} \left| \frac{\nabla(u - v)}{2} \right|^p \\
&\leq \frac{1}{2} \left( \frac{1}{p} |\nabla u|^p + \frac{1}{p} |\nabla v|^p \right),
\end{align*}
and
\begin{align*}
(q_- - 1)\frac{\varepsilon}{8} \cdot \frac{\mu}{q} \left| \frac{\nabla(u - v)}{2} \right|^q
&\leq \frac{\mu}{q} \left| \frac{\nabla(u - v)}{2} \right|^q \cdot \frac{q(q - 1)}{2} \left( \frac{\varepsilon}{4} \right)^{2 - q} \\
&\leq \left| \frac{\nabla(u - v)}{2} \right|^q \cdot \frac{\mu(q - 1)}{2} \left| \frac{\nabla(u - v)}{|\nabla u| + |\nabla v|} \right|^{2 - q} \\
&= \frac{\mu(q - 1)}{2^{q + 1}} \cdot \frac{|\nabla(u - v)|^2}{(|\nabla u| + |\nabla v|)^{2 - q}}.
\end{align*}
The application of inequality (i) of Lemma~\ref{inequalities} yields
\begin{align*}
\frac{\mu}{q} \left| \frac{\nabla(u+v)}{2} \right|^q + (q_- - 1)\frac{\varepsilon}{8} \cdot \frac{\mu}{q} \left| \frac{\nabla(u - v)}{2} \right|^q
&\leq \frac{\mu}{q} \left| \frac{\nabla(u+v)}{2} \right|^q \\
&\quad \quad \quad + \frac{\mu(q - 1)}{2^{q + 1}} \cdot \frac{|\nabla(u - v)|^2}{(|\nabla u| + |\nabla v|)^{2 - q}} \\
&\leq \frac{1}{2} \left( \frac{\mu}{q} |\nabla u|^q + \frac{\mu}{q} |\nabla v|^q \right).
\end{align*}
It follows by combining the previous inequalities and integrating on the set $A_\varepsilon$ that
\[
\varrho^{A_\varepsilon}\left( \frac{u + v}{2} \right) + (q_- - 1)\frac{\varepsilon}{8} \cdot \varrho^{A_\varepsilon}\left( \frac{u - v}{2} \right) 
\leq \frac{1}{2} \left( \varrho^{A_\varepsilon}(u) + \varrho^{A_\varepsilon}(v) \right).
\]
On the set \( B_\varepsilon \), we have \( q \geq 2 \) and \( 1 < p_- \leq p < 2 \). Using a similar argument to the one above and the positivity of \( \mu \), it is readily concluded that
\[
\varrho^{B_\varepsilon}\left( \frac{u + v}{2} \right) + (p_- - 1)\frac{\varepsilon}{8} \cdot \varrho^{B_\varepsilon}\left( \frac{u - v}{2} \right) 
\leq \frac{1}{2} \left( \varrho^{B_\varepsilon}(u) + \varrho^{B_\varepsilon}(v) \right).
\]
Finally, on the set \( C_\varepsilon \), it holds \( 1 < m \leq p_- \leq p < 2 \) and \( 1 < m \leq q_- \leq q < 2 \). On account of inequality~(i) of Lemma~\ref{inequalities}, one has
\begin{align*}
\frac{1}{p} \left| \frac{\nabla(u + v)}{2} \right|^p 
+ \frac{m - 1}{p} \cdot \frac{\varepsilon}{8} \left| \frac{\nabla(u - v)}{2} \right|^p 
&\leq \frac{1}{p} \left| \frac{\nabla(u + v)}{2} \right|^p 
+ \frac{p_- - 1}{p} \cdot \frac{\varepsilon}{8} \left| \frac{\nabla(u - v)}{2} \right|^p \\
&\leq \frac{1}{p} \left| \frac{\nabla(u + v)}{2} \right|^p 
+ \frac{p - 1}{2} \left( \frac{\varepsilon}{4} \right)^{2 - p} \left| \frac{\nabla(u - v)}{2} \right|^p \\
&\leq \frac{1}{p} \left| \frac{\nabla(u + v)}{2} \right|^p \\
&\quad\quad \quad + \frac{p - 1}{2} \left( \frac{|\nabla(u - v)|}{|\nabla u| + |\nabla v|} \right)^{2 - p} \left| \frac{\nabla(u - v)}{2} \right|^p \\
&\leq \frac{1}{2} \left( \frac{1}{p} |\nabla u|^p + \frac{1}{p} |\nabla v|^p \right).
\end{align*}
Since \( \mu \geq 0 \), the next inequality follows analogously, namely
\[
\frac{\mu}{q} \left| \frac{\nabla(u + v)}{2} \right|^q 
+ \frac{(m - 1)\varepsilon}{8} \cdot \frac{\mu}{q} \left| \frac{\nabla(u - v)}{2} \right|^q 
\leq \frac{1}{2} \left( \frac{\mu}{q} |\nabla u|^q + \frac{\mu}{q} |\nabla v|^q \right).
\]
Putting everything together and integrating over \( C_\varepsilon \), it follows 
\[
\varrho^{C_\varepsilon} \left( \frac{u + v}{2} \right) 
+ (m - 1)\frac{\varepsilon}{8} \cdot \varrho^{C_\varepsilon} \left( \frac{u - v}{2} \right) 
\leq \frac{1}{2} \left( \varrho^{C_\varepsilon}(u) + \varrho^{C_\varepsilon}(v) \right).
\]
This completes the proof of Lemma~\ref{lemma2}.
\end{proof}

\medskip
Using the previous lemmas, the stage is now set to state the following important result:

\begin{theorem}\label{theorem-uc}
Let the domain \( \Omega \) and the functions \( p, q, \mu \) be as described above. Assume \( m = \min\{p_-, q_-\} > 1 \). Let \( \varepsilon > 0 \) satisfy
\[
\varepsilon < \min\left\{1, \sqrt{\frac{32}{m - 1}} \right\}.
\]
Let \( u, v \in W^{1,H}(\Omega) \) be such that
\[
\varrho\left( \frac{u - v}{2} \right) > \varepsilon \cdot \frac{\varrho(u) + \varrho(v)}{2}.
\]
Then the following inequality holds:
\[
\varrho\left( \frac{u + v}{2} \right) \leq \left( 1 - \delta(\varepsilon) \right) \cdot \frac{\varrho(u) + \varrho(v)}{2},
\]
where
\[
\delta(\varepsilon) = \min\left\{ \frac{\varepsilon}{2}, \frac{(m - 1)\varepsilon^2}{32} \right\}.
\]
\end{theorem}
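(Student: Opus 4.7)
The plan is to exploit a natural dichotomy between the region $\Omega_{22}$, where both exponents are at least $2$ (so parallelogram-type inequalities are available), and its complement $\Omega\setminus\Omega_{22}$, where the finer estimates of Lemma~\ref{lemma2} must be invoked. Writing
\[
\varrho\Bigl(\tfrac{u-v}{2}\Bigr)=\varrho^{\Omega_{22}}\Bigl(\tfrac{u-v}{2}\Bigr)+\varrho^{\Omega\setminus\Omega_{22}}\Bigl(\tfrac{u-v}{2}\Bigr),
\]
the hypothesis $\varrho(\tfrac{u-v}{2})>\varepsilon\cdot\tfrac{\varrho(u)+\varrho(v)}{2}$ forces at least one of these summands to exceed $\tfrac{\varepsilon}{2}\cdot\tfrac{\varrho(u)+\varrho(v)}{2}$. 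If the $\Omega_{22}$-summand dominates, Lemma~\ref{lemma1} applies verbatim and produces $\varrho(\tfrac{u+v}{2})\leq(1-\tfrac{\varepsilon}{2})\cdot\tfrac{\varrho(u)+\varrho(v)}{2}$, matching the first branch of $\delta(\varepsilon)$.

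Otherwise the complement summand dominates, so the hypothesis of Lemma~\ref{lemma2} is satisfied. I would then decompose $\Omega$ into the three pairwise disjoint pieces $\Omega_{22}$, $G_\varepsilon\cap(\Omega\setminus\Omega_{22})$, and $A_\varepsilon\cup B_\varepsilon\cup C_\varepsilon=\Omega\setminus(\Omega_{22}\cup G_\varepsilon)$, and bound $\varrho(\tfrac{u+v}{2})$ on each piece separately. On the first two pieces, plain convexity of $\varrho$ gives $\varrho^X(\tfrac{u+v}{2})\leq\tfrac{1}{2}(\varrho^X(u)+\varrho^X(v))$. On the third piece, estimates \eqref{est3}, \eqref{est4}, \eqref{est5} of Lemma~\ref{lemma2} apply; since $p_--1,q_--1\geq m-1$, summing them yields
\[
\varrho^{A_\varepsilon\cup B_\varepsilon\cup C_\varepsilon}\Bigl(\tfrac{u+v}{2}\Bigr)+\frac{(m-1)\varepsilon}{8}\,\varrho^{A_\varepsilon\cup B_\varepsilon\cup C_\varepsilon}\Bigl(\tfrac{u-v}{2}\Bigr)\leq\frac{1}{2}\Bigl(\varrho^{A_\varepsilon\cup B_\varepsilon\cup C_\varepsilon}(u)+\varrho^{A_\varepsilon\cup B_\varepsilon\cup C_\varepsilon}(v)\Bigr).
\]
The crucial ingredient is now inequality~\eqref{Est2}, derived inside the proof of Lemma~\ref{lemma2}, which bounds $\varrho^{A_\varepsilon\cup B_\varepsilon\cup C_\varepsilon}(\tfrac{u-v}{2})$ from below by $\tfrac{\varepsilon}{4}\cdot\tfrac{\varrho(u)+\varrho(v)}{2}$. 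Adding the contributions from the three pieces then gives
\[
\varrho\Bigl(\tfrac{u+v}{2}\Bigr)\leq\Bigl(1-\tfrac{(m-1)\varepsilon^2}{32}\Bigr)\cdot\tfrac{\varrho(u)+\varrho(v)}{2},
\]
matching the second branch of $\delta(\varepsilon)$.

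Combining the two cases gives the claimed $\delta(\varepsilon)=\min\{\varepsilon/2,(m-1)\varepsilon^2/32\}$, and the restriction $\varepsilon<\sqrt{32/(m-1)}$ is precisely what guarantees $(m-1)\varepsilon^2/32<1$, so that $\delta(\varepsilon)\in(0,1)$ as required by Definition~\ref{defuc}. The main difficulty is organisational rather than conceptual: one has to recognise that the auxiliary estimate~\eqref{Est2}, which appears inside another proof instead of as a standalone result, is exactly what converts the $O(\varepsilon)$ gain of Lemma~\ref{lemma2} into the $O(\varepsilon^2)$ deficit visible in the conclusion, and that the three distinct coefficients $q_--1$, $p_--1$ and $m-1$ in \eqref{est3}--\eqref{est5} can be uniformly lowered to $m-1$ before summing over the pieces $A_\varepsilon,B_\varepsilon,C_\varepsilon$.
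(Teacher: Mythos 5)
Your proposal is correct and follows essentially the same route as the paper: the same dichotomy between $\Omega_{22}$ and its complement, Lemma~\ref{lemma1} in the first case, and in the second case the combination of estimates \eqref{est3}--\eqref{est5} (with coefficients uniformly lowered to $m-1$) together with the lower bound \eqref{Est2} and plain convexity on the remaining region. Your remarks on the role of \eqref{Est2} in upgrading the $O(\varepsilon)$ gain to an $O(\varepsilon^2)$ deficit and on the purpose of the bound $\varepsilon<\sqrt{32/(m-1)}$ are both accurate.
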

\begin{proof}
It is clear that under the stated assumption on \( u \) and \( v \), either
\begin{equation}\label{Ineq1}
\varrho^{\Omega_{22}}\left( \frac{u - v}{2} \right) > \frac{\varepsilon}{2} \cdot \frac{\varrho(u) + \varrho(v)}{2}
\end{equation}
or
\begin{equation}\label{Ineq2}
\varrho^{\Omega \setminus \Omega_{22}}\left( \frac{u - v}{2} \right) > \frac{\varepsilon}{2} \cdot \frac{\varrho(u) + \varrho(v)}{2}
\end{equation}
must hold.\\
If inequality \eqref{Ineq1} holds, it follows from Lemma~\ref{lemma1}
that
\begin{align*}
\varrho\left( \frac{u + v}{2} \right) 
&\leq \left( 1 - \frac{\varepsilon}{2} \right) \cdot \frac{\varrho(u) + \varrho(v)}{2} \\
&\leq \left( 1 - \delta(\varepsilon) \right) \cdot \frac{\varrho(u) + \varrho(v)}{2}.
\end{align*}
Otherwise inequality \eqref{Ineq2} holds and Lemma~\ref{lemma2} applies. Note that
\[
A_\varepsilon \cup B_\varepsilon \cup C_\varepsilon = \Omega \setminus (\Omega_{22} \cup G_\varepsilon) = (\Omega \setminus \Omega_{22}) \cap E_\varepsilon.
\]
Together with inequality~\eqref{Est2}, this implies
\[
\varrho^{A_\varepsilon \cup B_\varepsilon \cup C_\varepsilon}\left( \frac{u - v}{2} \right) 
\geq \frac{\varepsilon}{4} \cdot \frac{\varrho(u) + \varrho(v)}{2}.
\]
Owing to inequalities~\eqref{est3}, \eqref{est4}, and \eqref{est5} it follows that
\[
\varrho^{A_\varepsilon \cup B_\varepsilon \cup C_\varepsilon}\left( \frac{u + v}{2} \right) 
+ (m - 1)\frac{\varepsilon}{8} \cdot \varrho^{A_\varepsilon \cup B_\varepsilon \cup C_\varepsilon}\left( \frac{u - v}{2} \right) 
\leq \frac{1}{2} \left( \varrho^{A_\varepsilon \cup B_\varepsilon \cup C_\varepsilon}(u) + \varrho^{A_\varepsilon \cup B_\varepsilon \cup C_\varepsilon}(v) \right),
\]
which implies
\begin{align*}
\varrho^{A_\varepsilon \cup B_\varepsilon \cup C_\varepsilon}\left( \frac{u + v}{2} \right) 
\leq \frac{1}{2} \left( \varrho^{A_\varepsilon \cup B_\varepsilon \cup C_\varepsilon}(u) + \varrho^{A_\varepsilon \cup B_\varepsilon \cup C_\varepsilon}(v) \right) - (m - 1)\frac{\varepsilon^2}{32} \cdot \frac{\varrho(u) + \varrho(v)}{2}.
\end{align*}
On account of the convexity of the modular it also holds that
\[
\varrho^{\Omega \setminus (A_\varepsilon \cup B_\varepsilon \cup C_\varepsilon)}\left( \frac{u + v}{2} \right) 
\leq \frac{1}{2} \left( \varrho^{\Omega \setminus (A_\varepsilon \cup B_\varepsilon \cup C_\varepsilon)}(u) + \varrho^{\Omega \setminus (A_\varepsilon \cup B_\varepsilon \cup C_\varepsilon)}(v) \right).
\]
Combining both parts it follows that
\[
\varrho\left( \frac{u + v}{2} \right) 
\leq \frac{\varrho(u) + \varrho(v)}{2} - (m - 1)\frac{\varepsilon^2}{32} \cdot \frac{\varrho(u) + \varrho(v)}{2},
\]
which implies
\[
\varrho\left( \frac{u + v}{2} \right) 
\leq \left( 1 - \delta(\varepsilon) \right) \cdot \frac{\varrho(u) + \varrho(v)}{2}.
\]
This completes the proof of Theorem~\ref{theorem-uc}.
\end{proof}

\medskip
As an immediate consequence of Theorem~\ref{theorem-uc}, we arrive at the following fundamental result regarding the modular geometric properties of the semimodular $\varrho_H$.\\

\begin{theorem}\label{uctheorem}
Let \( \Omega \) be a Lipschitz domain, and let \( p, q, \mu \) be Borel measurable functions on \( \Omega \) such that \( 1 < m = \min\{p_-, q_-\} \) and \( \mu \geq 0 \). Then the semimodular defined by
\[
\varrho(u) = \varrho_H(|\nabla u|) = \int_{\Omega} \left( \frac{1}{p} |\nabla u|^p + \frac{\mu}{q} |\nabla u|^q \right) dx,
\]
for all \( u \in W^{1,H}(\Omega) \),
is uniformly convex.
\end{theorem}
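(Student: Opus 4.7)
The plan is to derive Theorem~\ref{uctheorem} as an essentially immediate corollary of Theorem~\ref{theorem-uc}. The latter already produces, for every $\varepsilon$ with $0 < \varepsilon < \varepsilon_* := \min\{1,\sqrt{32/(m-1)}\}$, a quantitative modulus $\delta(\varepsilon) = \min\{\varepsilon/2,\,(m-1)\varepsilon^2/32\} \in (0,1)$ satisfying the required implication under the strict hypothesis $\varrho((u-v)/2) > \varepsilon(\varrho(u)+\varrho(v))/2$. All the substantive analytic content---the decomposition of $\Omega$ into the four regions $\Omega_{ij}$, the case split according to whether $|\nabla(u-v)|$ is dominated by $|\nabla u|+|\nabla v|$, and the Clarkson-type inequalities of Lemma~\ref{inequalities}---has thus already been carried out. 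What remains is only a formal alignment with Definition~\ref{defuc}.

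Two minor discrepancies need to be reconciled. First, Definition~\ref{defuc} phrases the hypothesis with the non-strict inequality $\varrho((u-v)/2) \geq \varepsilon(\varrho(u)+\varrho(v))/2$, whereas Theorem~\ref{theorem-uc} demands strict inequality. Second, Definition~\ref{defuc} must hold for every $\varepsilon > 0$, while Theorem~\ref{theorem-uc} restricts $\varepsilon$ to lie below $\varepsilon_*$. Both gaps are closed by a simple shrinking argument: given arbitrary $\varepsilon>0$, set $\varepsilon_0 := \tfrac{1}{2}\min\{\varepsilon,\varepsilon_*\}$, so that $0 < \varepsilon_0 < \varepsilon$ and $\varepsilon_0 < \varepsilon_*$, and take the modulus of convexity to be $\delta(\varepsilon) := \min\{\varepsilon_0/2,\,(m-1)\varepsilon_0^2/32\} \in (0,1)$.

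With these choices, the verification proceeds as follows. Fix $u,v \in W^{1,H}(\Omega)$ with $\varrho((u-v)/2) \geq \varepsilon(\varrho(u)+\varrho(v))/2$. If $\varrho(u)+\varrho(v)=0$, then convexity of the semimodular $\varrho$ (property (3) in the list of modular properties) forces $\varrho((u+v)/2) = 0$ and the desired inequality is vacuous. Otherwise the right-hand side of the hypothesis is strictly positive, and the strict inequality $\varrho((u-v)/2) > \varepsilon_0(\varrho(u)+\varrho(v))/2$ follows from $\varepsilon_0 < \varepsilon$; Theorem~\ref{theorem-uc} applied with parameter $\varepsilon_0$ then delivers exactly the bound required by Definition~\ref{defuc}.

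In this sense there is no genuine obstacle left at this stage. The geometric difficulty---producing a strict convexity gain on each of the pieces $\Omega_{22}$, $A_\varepsilon$, $B_\varepsilon$, and $C_\varepsilon$ in the presence of mixed $p(x)$/$q(x)$ growth and possible degeneracy of $\mu$---has already been absorbed into Lemmas~\ref{lemma1} and~\ref{lemma2} and Theorem~\ref{theorem-uc}; the present statement is essentially the repackaging of that quantitative bound in the exact language of Definition~\ref{defuc}.
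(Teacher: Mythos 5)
Your proposal is correct and follows the same route as the paper, which records Theorem~\ref{uctheorem} as ``an immediate consequence of Theorem~\ref{theorem-uc}'' without further comment. The only thing you add is the explicit reconciliation between Theorem~\ref{theorem-uc} (strict hypothesis, $\varepsilon$ restricted below $\min\{1,\sqrt{32/(m-1)}\}$) and Definition~\ref{defuc} (non-strict hypothesis, arbitrary $\varepsilon>0$), via the shrinking device $\varepsilon_0:=\tfrac12\min\{\varepsilon,\varepsilon_*\}$ and the observation that $\varrho(u)+\varrho(v)=0$ forces $\varrho((u+v)/2)=0$ by convexity; these are precisely the small gaps the paper passes over silently, and your treatment of them is correct.
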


\smallskip

The uniform convexity of the modular \( \varrho_H \) on \( L^H(\Omega) \) follows by an argument identical to the one used in Theorem~\ref{uctheorem}, applied to the zero-order term. Since both \( \varrho_H(u) \) and \( \varrho_H(|\nabla u|) \) are uniformly convex, it follows from Remark~\ref{sum-uc} that their sum is also uniformly convex. We thus obtain the following result:

\begin{corollary}\label{varrho1,H-uc}  Let \( \Omega \) be a Lipschitz domain, and let \( p, q, \mu \) be Borel measurable functions on \( \Omega \) such that \( 1 < m = \min\{p_-, q_-\} \) and \( \mu \geq 0 \).  Then the modular
\[
\varrho_{1,H}(u) = \varrho_H(u) + \varrho_H(|\nabla u|)
\]
is uniformly convex on \( W^{1,H}(\Omega) \).
\end{corollary}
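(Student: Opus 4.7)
The plan is to express $\varrho_{1,H}$ as the sum of two semimodulars on $W^{1,H}(\Omega)$, namely $\varrho_{1,H}(u) = \varrho_H(|\nabla u|) + \varrho_H(u)$, and then invoke Remark~\ref{sum-uc} to deduce that the sum inherits uniform convexity from its two summands. The first summand, $u \mapsto \varrho_H(|\nabla u|)$, is already known to be uniformly convex by Theorem~\ref{uctheorem}, so the only genuinely new piece of work is to verify the analogous property for the zero-order modular $u \mapsto \varrho_H(u)$, viewed here as a semimodular on $W^{1,H}(\Omega)$ (vanishing exactly on $u=0$, as required by Definition~\ref{defuc}).

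For the zero-order term, I would retrace the arguments of Lemma~\ref{lemma1}, Lemma~\ref{lemma2}, and Theorem~\ref{theorem-uc}, systematically replacing $|\nabla u|$, $|\nabla v|$, and $|\nabla(u-v)|$ by $|u|$, $|v|$, and $|u-v|$ throughout. The splitting $\Omega = \Omega_{11} \cup \Omega_{12} \cup \Omega_{21} \cup \Omega_{22}$ depends only on the exponent functions $p$ and $q$ and so carries over unchanged, while the sets $G_\alpha$ and $E_\alpha$ get redefined in terms of the new scalar quantities $|u-v|$ and $|u|+|v|$. The pointwise inequalities of Lemma~\ref{inequalities} are really statements about nonnegative scalars combined through the convexity of $t \mapsto t^h$; the Euclidean norm of a gradient serves only as such a nonnegative scalar in the proof, so the inequalities remain valid when $|\nabla u|$ is replaced by $|u|$. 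Consequently, every estimate in the three results above transfers verbatim and the zero-order modular turns out to be uniformly convex with the same modulus $\delta(\varepsilon) = \min\{\varepsilon/2, (m-1)\varepsilon^2/32\}$.

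Having established uniform convexity of both the zero-order and gradient semimodulars, a direct application of Remark~\ref{sum-uc} closes the argument. I do not expect any serious obstacle: the main point is the essentially mechanical verification that the gradient structure plays no role in the uniform-convexity argument beyond providing a nonnegative scalar at each point. Once this observation is recorded, the zero-order case follows by the same computations, and the sum property of Remark~\ref{sum-uc} assembles the final conclusion.
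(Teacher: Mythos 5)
Your proposal matches the paper's argument exactly: invoke Theorem~\ref{uctheorem} for the gradient term, observe that the zero-order modular is uniformly convex by the same computation with $|\nabla u|$ replaced by $|u|$, and conclude via Remark~\ref{sum-uc}. One small caveat on your justification for transferring Lemma~\ref{inequalities}: the inequalities are \emph{not} functions of the nonnegative scalars $|\nabla u|$, $|\nabla v|$ alone (they also involve $|\nabla u \pm \nabla v|$, which cannot be recovered from the two norms), but they do hold for vectors in $\mathbb{R}^n$ for every $n \geq 1$, so the scalar case $n=1$ needed for the zero-order term is covered.
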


\medskip
\noindent It is worth emphasizing that the uniform convexity of the modular \( \varrho_H \) remains valid even when the exponents \( p \) and \( q \) are unbounded on \( \Omega \), and \( \mu \) is merely Borel measurable and almost everywhere nonnegative, as long as \( m = \min\{p_-, q_-\} > 1 \). Theorem~\ref{uctheorem} applies mutatis mutandis to the modular \( \varrho_{1,H} \) in~\eqref{defrhoHsob}, showing that \( W^{1,H}(\Omega) \) is modularly uniformly convex even in cases where the Luxemburg norm fails to be uniformly convex—namely, when either \( p_+ = \infty \) or \( q_+ = \infty \). In particular, Theorem~\ref{uctheorem} remains valid without requiring continuity of the exponents or boundedness of the domain.

\medskip

As mentioned earlier, the double-phase Poisson problem can be extended to a more general multi-phase framework involving several distinct growth exponents and associated weights; see problem (\ref{MPPP}). In this setting, the modular functional is governed by the function
\[
\cal{H}(x,t) := \frac{1}{p(x)} t^{p(x)} + \sum_{j=1}^{k} \frac{\mu_j(x)}{q_j(x)} t^{q_j(x)},
\]
where \( p, q_j \in C(\overline{\Omega}) \), each \( \mu_j \geq 0 \) is Borel-measurable on \( \Omega \), and \( 1 < p(x), q_j(x)\) throughout \( \overline{\Omega} \). The associated modular is then given by
\[
\varrho_{\cal{H}}(u) := \int_{\Omega} \left( \frac{1}{p(x)} |u|^{p(x)} + \sum_{j=1}^{k} \frac{\mu_j(x)}{q_j(x)} |u|^{q_j(x)} \right) dx,
\]
as well as, the modular $\varrho_{1,\cal{H}}(u) = \varrho_{\cal{H}}(u) + \varrho_{\cal{H}}(|\nabla u|)$.  The associated modular vector space is denoted \( W^{1,\cal{H}}(\Omega) \).\\

\begin{theorem}\label{multi-uctheorem}
Let \( \Omega \) be a Lipschitz domain, and let \( p, q_1, \ldots, q_k : \Omega \to (1, \infty) \) and \( \mu_1, \ldots, \mu_k : \Omega \to [0, \infty) \) be Borel measurable functions. Then the  semimodular
\[
\varrho(u) = \varrho_{\mathcal{H}}(|\nabla u|) = \int_{\Omega} \left( \frac{1}{p(x)} |\nabla u|^{p(x)} + \sum_{j=1}^{k} \frac{\mu_j(x)}{q_j(x)} |\nabla u|^{q_j(x)} \right) dx
\]
is uniformly convex on \( W^{1,\mathcal{H}}(\Omega) \), provided \( m = \min\left\{ p_-, (q_1)_-, \ldots, (q_k)_- \right\} > 1 \). In particular, the modular  $\varrho_{1,\mathcal{H}}(u) = \varrho_{\mathcal{H}}(u) + \varrho_{\mathcal{H}}(|\nabla u|)$ is also uniformly convex on \( W^{1,\mathcal{H}}(\Omega) \).
\end{theorem}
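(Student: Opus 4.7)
The plan is to push the proof of Theorem~\ref{uctheorem} through unchanged, enlarging the pointwise partition of \(\Omega\) from four cells (indexed by the sign of \(p-2\) and \(q-2\)) to \(2^{k+1}\) cells indexed by whether each of the exponents \(p, q_1, \ldots, q_k\) lies in \((1,2)\) or in \([2,\infty)\) at a given point. Define
\[
\Omega^{*} := \{x \in \Omega : p(x) \ge 2 \text{ and } q_j(x) \ge 2 \text{ for all } j = 1, \ldots, k\},
\]
playing the role of \(\Omega_{22}\), and retain the sets \(G_\varepsilon, E_\varepsilon\) exactly as defined before Lemma~\ref{lemma2}.

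Fix \(\varepsilon < \min\{1, \sqrt{32/(m-1)}\}\) and assume \(\varrho\bigl((u-v)/2\bigr) > \varepsilon(\varrho(u)+\varrho(v))/2\). Then one of the alternatives
\[
\varrho^{\Omega^{*}}\!\Bigl(\tfrac{u-v}{2}\Bigr) > \tfrac{\varepsilon}{2} \cdot \tfrac{\varrho(u)+\varrho(v)}{2} \quad\text{or}\quad \varrho^{\Omega \setminus \Omega^{*}}\!\Bigl(\tfrac{u-v}{2}\Bigr) > \tfrac{\varepsilon}{2} \cdot \tfrac{\varrho(u)+\varrho(v)}{2}
\]
must hold. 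In the first case, inequality~(ii) of Lemma~\ref{inequalities} applies to every one of the \(k+1\) exponents pointwise on \(\Omega^{*}\); summing against the weights \(1, \mu_1, \ldots, \mu_k\) and integrating yields, \emph{mutatis mutandis}, the same bound obtained in Lemma~\ref{lemma1}, namely \(\varrho((u+v)/2) \le (1 - \varepsilon/2)(\varrho(u)+\varrho(v))/2\).

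In the second case, the argument of Lemma~\ref{lemma2} provides, by the identical estimate of the contribution from \(G_\varepsilon \cap (\Omega \setminus \Omega^{*})\) (which only uses the definition of \(G_\varepsilon\) and \(\varepsilon < 1\)), the analog of~\eqref{Est2}:
\[
\varrho^{\Omega \setminus (\Omega^{*} \cup G_\varepsilon)}\!\Bigl(\tfrac{u-v}{2}\Bigr) \ \ge \ \tfrac{\varepsilon}{4} \cdot \tfrac{\varrho(u)+\varrho(v)}{2}.
\]
On \(\Omega \setminus (\Omega^{*} \cup G_\varepsilon)\), at every point at least one exponent lies below \(2\), so in particular \(m<2\) on this set (hence \((m-1)\varepsilon/8 < 1\)). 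For every exponent \(r \in \{p, q_1, \ldots, q_k\}\) with associated weight \(w_r \in \{1, \mu_1, \ldots, \mu_k\}\), the very computation carried out on \(A_\varepsilon, B_\varepsilon, C_\varepsilon\) in Lemma~\ref{lemma2} yields the pointwise inequality
\[
\frac{w_r(x)}{r(x)}\Bigl|\tfrac{\nabla(u+v)}{2}\Bigr|^{r(x)} + (m-1)\tfrac{\varepsilon}{8} \cdot \frac{w_r(x)}{r(x)}\Bigl|\tfrac{\nabla(u-v)}{2}\Bigr|^{r(x)} \ \le \ \frac{w_r(x)}{2r(x)} \bigl(|\nabla u|^{r(x)} + |\nabla v|^{r(x)}\bigr),
\]
via inequality~(ii) where \(r(x) \ge 2\) and via inequality~(i) where \(r(x) < 2\); the uniformity of the coefficient \((m-1)\varepsilon/8\) across all cells follows from \(m \le r(x)\) and \((\varepsilon/4)^{2-r(x)} \ge \varepsilon/4\) (valid since \(\varepsilon/4 < 1\) and \(r(x) \ge 1\)). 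Summing in \(r\), integrating over \(\Omega \setminus (\Omega^{*} \cup G_\varepsilon)\), and bounding the complementary contribution by convexity of \(\varrho\) produces
\[
\varrho\!\Bigl(\tfrac{u+v}{2}\Bigr) \ \le \ \Bigl(1 - (m-1)\tfrac{\varepsilon^{2}}{32}\Bigr)\, \tfrac{\varrho(u)+\varrho(v)}{2}.
\]

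Combining the two alternatives gives uniform convexity of \(\varrho\) with modulus \(\delta(\varepsilon) = \min\{\varepsilon/2, (m-1)\varepsilon^{2}/32\}\). The uniform convexity of \(\varrho_{1,\mathcal H}(u) = \varrho_{\mathcal H}(u) + \varrho_{\mathcal H}(|\nabla u|)\) then follows at once: the same argument, applied with \(u, v\) in place of \(\nabla u, \nabla v\), shows that the zeroth-order modular \(\varrho_{\mathcal H}(u)\) is uniformly convex, and Remark~\ref{sum-uc} then delivers uniform convexity of the sum. The only genuine obstacle is the notational bookkeeping across \(2^{k+1}\) pointwise regimes rather than four; the essential observation is that the resulting modulus depends only on \(m\), not on \(k\) nor on the individual exponents.
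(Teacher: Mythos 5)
The paper states Theorem~\ref{multi-uctheorem} without an explicit proof, relying on the reader to recognize that it follows \emph{mutatis mutandis} from Lemmas~\ref{lemma1}, \ref{lemma2} and Theorem~\ref{theorem-uc}; your proposal correctly spells out that extension, replacing $\Omega_{22}$ by the set where all $k+1$ exponents are $\geq 2$, reusing the $G_\varepsilon$/$E_\varepsilon$ split verbatim, and applying inequalities~(i) and~(ii) of Lemma~\ref{inequalities} exponent-by-exponent with the uniform coefficient $(m-1)\varepsilon/8$ (valid since $m \leq r(x)$ for every exponent $r$ and $(\varepsilon/4)^{2-r(x)} \geq \varepsilon/4$ on the supercritical region). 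This is exactly the approach the paper intends, and your closing observation — that the resulting modulus $\delta(\varepsilon)=\min\{\varepsilon/2,(m-1)\varepsilon^2/32\}$ depends only on $m$ and not on $k$ — is the point the paper leaves implicit.
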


\medskip
With the modular uniform convexity of \( W^{1,H}(\Omega) \) now fully established, we are ready to address the core analytic aspects of the double-phase Poisson problem. The next section is devoted to formulating the variational problem, analyzing the double-phase operator, and proving the existence and uniqueness of weak solutions.

\section{Solvability of the Double-Phase Poisson Problem}\label{sol}

In this section, we formulate the variational framework for the double-phase Poisson problem (\ref{DPPP}), analyze the associated Dirichlet integral, and establish the existence and uniqueness of weak solutions. The key ingredients in our approach are the uniform convexity of the modular established in Section~\ref{wh-UC} and the variational minimization method developed here.

\medskip
As established in Lemma~\ref{boundedbelow}, the functional \( \mathcal{I} : W^{1,H}_0(\Omega) \to \mathbb{R} \), defined by
\[
\mathcal{I}(u) = \varrho_H(\varphi - u) - \langle f, u \rangle,
\]
is bounded below. We denote by \( I_0 \) its infimum:
\[
-\infty < I_0 := \inf\, \Big\{\mathcal{I}(u); \, u \in W^{1,H}_0(\Omega)\Big\}\;  \leq \varrho_H(\varphi).
\]

\noindent The functional \( \mathcal{I}(u) = \varrho_H(\varphi - u) - \langle f, u \rangle \), while not representing the Dirichlet energy itself, plays an important auxiliary role in the analysis of the double-phase problem. It combines a modular deviation from a given function \( \varphi \) with a duality pairing involving the source term \( f \). The structure of \( \mathcal{I} \) is particularly suited for variational analysis, as it reflects the underlying convex geometry of the space. Its boundedness from below will be instrumental in the existence argument developed next.

\begin{theorem}\label{theoremexistence}  Let \( \Omega \subseteq \mathbb{R}^n \) be an open set, and let \( p, q \in C(\overline{\Omega}) \). Assume \( m = \min\{p_-, q_-\} > 1 \).  Suppose \( f \in \left(W^{1,H}_0(\Omega)\right)^* \) and \( \varphi \in W^{1,H}(\Omega) \). There exists a unique \( u_0 \in W^{1,H}_0(\Omega) \) that minimizes the functional \(\mathcal{I} \), that is,
\[
\mathcal{I}(u_0) = \min \Big\{ \mathcal{I}(u) \; ; \; u \in W^{1,H}_0(\Omega) \Big\} = I_0.
\]
\end{theorem}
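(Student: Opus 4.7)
The plan is to invoke the direct method of the calculus of variations, leveraging the modular uniform convexity of $\varrho_H$ (Theorem~\ref{uctheorem}) as a substitute for the usual reflexivity plus weak lower semicontinuity toolkit. First note that $\mathcal{I}$ is convex, being the sum of the convex map $u \mapsto \varrho_H(\varphi - u)$ and the linear map $u \mapsto -\langle f, u\rangle$. Fixing a minimizing sequence $(u_n) \subset W^{1,H}_0(\Omega)$ with $\mathcal{I}(u_n) \to I_0$, I would recycle the estimate chain inside the proof of Lemma~\ref{boundedbelow}: since $\mathcal{I}(u_n)$ is eventually bounded above by $I_0 + 1$, those estimates pin $\varrho_H(\varphi - u_n)$ between two finite constants, whence via Proposition~\ref{normtomodular} and the Poincar\'e inequality~\eqref{poincare} the sequence $(u_n)$ is uniformly bounded in $W^{1,H}_0(\Omega)$; in particular $\varrho_H(\varphi - u_n) \leq R$ for some $R$ and all $n$.

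The central step is to promote the minimizing sequence to a modular Cauchy sequence using uniform convexity. Set $U_n := \varphi - u_n$ and note that $\tfrac{U_n - U_m}{2} = \tfrac{u_m - u_n}{2}$ while $\tfrac{U_n + U_m}{2} = \varphi - \tfrac{u_n + u_m}{2}$. If, contrary to expectation, there existed $\varepsilon_0 > 0$ and subsequences (which I may take as small as needed, so that $\varepsilon := \varepsilon_0/R$ is admissible in Theorem~\ref{theorem-uc}) with $\varrho_H\bigl(\tfrac{u_{n_k} - u_{m_k}}{2}\bigr) > \varepsilon_0$, then the uniform bound yields the relative inequality required by uniform convexity, producing $\delta > 0$ such that
\[
\varrho_H\Bigl(\varphi - \tfrac{u_{n_k}+u_{m_k}}{2}\Bigr) \leq (1-\delta)\, \tfrac{\varrho_H(\varphi - u_{n_k}) + \varrho_H(\varphi - u_{m_k})}{2}.
\]
Adding $-\langle f, \tfrac{u_{n_k}+u_{m_k}}{2}\rangle$ to both sides and using linearity converts this into
\[
\mathcal{I}\Bigl(\tfrac{u_{n_k}+u_{m_k}}{2}\Bigr) \leq \tfrac{\mathcal{I}(u_{n_k}) + \mathcal{I}(u_{m_k})}{2} - \delta\, \tfrac{\varrho_H(\varphi - u_{n_k}) + \varrho_H(\varphi - u_{m_k})}{2},
\]
whose left side is $\geq I_0$. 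Passing to the limit forces $\liminf_k \varrho_H(\varphi - u_{n_k}) = 0$ along the subsequence, but then the convexity bound $\varrho_H\bigl(\tfrac{u_{n_k} - u_{m_k}}{2}\bigr) \leq \tfrac{\varrho_H(\varphi - u_{n_k}) + \varrho_H(\varphi - u_{m_k})}{2}$ (using $\varrho_H(-w) = \varrho_H(w)$) contradicts the standing $> \varepsilon_0$.

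With the Cauchy property established, Proposition~\ref{normtomodular} transfers $\varrho_H\bigl(\tfrac{u_n - u_m}{2}\bigr) \to 0$ into $\bigl\| |\nabla(u_n - u_m)| \bigr\|_H \to 0$, and by Proposition~\ref{equivalentnorm}(3) together with completeness of $W^{1,H}_0(\Omega)$ the sequence converges in norm to some $u_0 \in W^{1,H}_0(\Omega)$. Continuity of $\varrho_H$ along norm-convergent sequences (again via Proposition~\ref{normtomodular}, applied to $\varphi - u_n \to \varphi - u_0$) combined with continuity of $f$ yields $\mathcal{I}(u_0) = I_0$. Uniqueness follows by repeating the uniform-convexity step on any two minimizers $u_0, u_0'$: convexity forces $\mathcal{I}\bigl(\tfrac{u_0+u_0'}{2}\bigr) = I_0$, so Theorem~\ref{theorem-uc} applied to the pair $(\varphi - u_0, \varphi - u_0')$ rules out $\varrho_H\bigl(\tfrac{u_0-u_0'}{2}\bigr) > 0$, and the norm comparison then forces $u_0 = u_0'$. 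The only subtle point I anticipate is the degenerate branch in the uniform-convexity dichotomy, where $\varrho_H(\varphi - u_n)$ might vanish along the bad subsequence; this is resolved by the convexity estimate above, so the argument never leaves the modular framework.
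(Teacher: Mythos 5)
Your overall strategy is the paper's: direct method, boundedness of the minimizing sequence via the $m>1$ estimate from Lemma~\ref{boundedbelow}, promotion to a modular Cauchy sequence via the uniform convexity of Theorem~\ref{theorem-uc}/\ref{uctheorem}, and then completeness of $W^{1,H}_0(\Omega)$ to produce the minimizer. The boundedness step, the translation trick $U_n=\varphi-u_n$, the choice $\varepsilon\approx\varepsilon_0/R$, the ``add $-\langle f,\cdot\rangle$ to both sides'' manipulation, and the uniqueness argument are all correct and match the paper (the paper phrases the Cauchy contradiction more directly as $I_0\le I_0-\delta\varepsilon$ rather than forcing $\liminf\varrho_H(\varphi-u_{n_k})=0$, and it handles uniqueness by ``arbitrariness of the minimizing sequence'' rather than your explicit repetition, but these are cosmetic differences).

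The one genuine gap is the last step, where you pass from $u_n\to u_0$ in $W^{1,H}_0(\Omega)$ to $\mathcal{I}(u_0)=I_0$ by invoking ``continuity of $\varrho_H$ along norm-convergent sequences (via Proposition~\ref{normtomodular})''. Proposition~\ref{normtomodular} compares $\|w\|_H$ with $\varrho_H(w)$ for a fixed $w$; it does \emph{not} say that $\|v_n-v\|_H\to 0$ implies $\varrho_H(v_n)\to\varrho_H(v)$. (For instance, it only pins $\varrho_H(v_n)$ into the window $[\|v_n\|_H^M,\|v_n\|_H^m]$ when $\|v_n\|_H\le 1$, which does not converge to a point.) Modular continuity along norm-convergent sequences is in fact true here because $p,q\in C(\overline\Omega)$ gives a $\Delta_2$-type bound $H(x,2t)\le 2^{\max\{p_+,q_+\}}H(x,t)$, but proving it requires an a.e.-subsequence plus a Vitali/dominated-convergence argument (as in the paper's Lemma~\ref{ldc}), not Proposition~\ref{normtomodular}. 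The paper sidesteps all of this: it extracts a subsequence with $\nabla(u_{j_k}-u)\to 0$ a.e.\ and applies Fatou's lemma, which only yields lower semicontinuity $\mathcal{I}(u_0)\le\liminf\mathcal{I}(u_{j_k})=I_0$ — and that, combined with the trivial $\mathcal{I}(u_0)\ge I_0$, already finishes the argument. You should either replace your continuity claim with this Fatou/lower-semicontinuity argument, or carefully justify the continuity via the $\Delta_2$-bound and a dominated-convergence variant; as written, the citation does not support the claim.
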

\begin{proof}
Let \( (u_n)\subset W_0^{1,H}(\Omega) \) be a minimizing sequence of \( \mathcal{I} \), i.e.,  \( \lim\limits_{n \to \infty} \mathcal{I}(u_n) = I_0 \). Set \( \eta_n = \mathcal{I}(u_n) - I_0 \geq 0 \), for any \( n \in \mathbb{N} \). Clearly, \( \lim\limits_{n \to \infty} \eta_n = 0 \). First note that \( (u_n) \) is bounded in \( W^{1,H}_0(\Omega) \). Assume not. Without loss of generality, we assume that \( \|\nabla (u_n - \varphi)\|_{H} \rightarrow \infty \). Then \( \|\nabla (u_n - \varphi)\|_{H} > 1 \) for \( n \geq n_0 \), for some \( n_0 \in \mathbb{N} \), and thus, the inequality
\[
\Big\|\nabla (u_n - \varphi)\Big\|_{H}^{m} \leq \varrho_H(\varphi - u_n) \leq \mathcal{I}(u_n) + f(u_n),
\]
would imply
\begin{align*}
\Big\|\nabla (u_n - \varphi)\Big\|_{H}^{m}
&\leq \|f\|_{\left(W^{1,H}(\Omega)\right)^{\ast}} \|\nabla u_n\|_{H} + I_0 + \sup\limits_{k \in \mathbb{N}} \eta_k \\ \nonumber
&\leq \|f\|_{\left(W^{1,H}_0(\Omega)\right)^{\ast}} \|\nabla (u_n - \varphi)\|_{H} + \|f\|_{\left(W^{1,H}_0(\Omega)\right)^{\ast}} \|\nabla \varphi\|_{H} + I_0 + \sup\limits_{k \in \mathbb{N}} \eta_k,
\end{align*}
for all \( n \in \mathbb{N} \), which is certainly not possible since \( m > 1 \) and \( \lim\limits_{n \to \infty} \|\nabla (u_n - \varphi)\|_{H} = \infty \).\\
Consequently, 
\begin{equation}\label{boundedabove}
\varrho(u_n - \varphi) \leq \Big\| \nabla (u_n - \varphi) \Big\|^{\max\{p_+, q_+\}}_H \leq C,
\end{equation}
for a positive constant \( C \) independent of \( n \). Next, we prove that the minimizing sequence \( (u_n) \) is Cauchy in \( W_0^{1,H}(\Omega) \). Assume the contrary, then there exists \( \varepsilon > 0 \) and a subsequence of \( (v_j) \), say \( (w_k) \), for which
\[
\varrho\left( \frac{w_j - w_k}{2} \right) \geq \varepsilon,
\]
which implies 
\[
\varepsilon \leq \varrho\left( \frac{w_j - w_k}{2} \right) \leq \frac{\varrho(w_j - \varphi) + \varrho(w_k - \varphi)}{2}.
\]
Owing to (\ref{boundedabove}),
\[
C^{-1} \varepsilon \frac{\varrho(w_j - \varphi) + \varrho(w_k - \varphi)}{2} \leq \varrho\left( \frac{w_j - w_k}{2} \right).
\]
The uniform convexity established in Theorem~\ref{uctheorem} asserts the existence of \( \delta(\varepsilon, C) = \delta : 0 < \delta < 1 \) such that for all \( j, k \),
\[
\varrho\left( \frac{w_j + w_k}{2} - \varphi \right) \leq (1 - \delta) \frac{\varrho(w_j - \varphi) + \varrho(w_k - \varphi)}{2},
\]
which after adding \( -\langle f, \frac{w_j + w_k}{2} \rangle \) to both sides yields
\[
\varrho\left( \frac{w_j + w_k}{2} - \varphi \right) - \langle f, \frac{w_j + w_k}{2} \rangle \leq (1 - \delta) \frac{\varrho(w_j - \varphi) + \varrho(w_k - \varphi)}{2} - \langle f, \frac{w_j + w_k}{2} \rangle,
\]
or
\begin{align*}
\mathcal{I}\left( \frac{w_j + w_k}{2} \right) &\leq \frac{1}{2} \left( \mathcal{I}(w_j) + \mathcal{I}(w_k) \right) - \delta \frac{\varrho(w_j - \varphi) + \varrho(w_k - \varphi)}{2} \\
&\leq \frac{1}{2} \left( \mathcal{I}(w_j) + \mathcal{I}(w_k) \right) - \delta \varepsilon.
\end{align*}
This clearly leads to \( I_0 \leq I_0 - \delta \varepsilon \), which is a contradiction. Hence, the minimizing sequence \( (u_j) \) is \( \varrho_H \)-Cauchy. Owing to inequalities in Proposition~\ref{normtomodular}, it follows
\[
\|\nabla (u_j - u_k)\|_H \rightarrow 0 \quad \text{as } j, k \rightarrow \infty,
\]
and on account of Poincar\'{e}'s inequality (\ref{poincare}) one has \( \|u_j - u_k\|_H \rightarrow 0 \) as \( j, k \rightarrow \infty \). Since \( W^{1,H}_0(\Omega) \) is a Banach space, one deduces the existence of \( u \in W^{1,H}_0(\Omega) \) such that \( \|u_j - u\|_{1,H} \rightarrow 0 \) as \( j \rightarrow \infty \). Due to the equivalence between norm and modular convergence, one has \( \varrho_{1,H}(u_j - u) \rightarrow 0 \) as \( j \rightarrow \infty \). In particular, \( \varrho_H(u_j - u) \rightarrow 0 \), and by \cite{Musielak} there exists a subsequence \( (u_{j_k}) \) such that \( \nabla (u_{j_k} - u)(x) \rightarrow 0 \) a.e. in \( \Omega \). Fatou's lemma leads to
\[
\mathcal{I}(u) \leq \liminf\limits_{k \to \infty}\, \mathcal{I}(u_{j_k}) \leq I_0,
\]
i.e., \( u \) is a minimizer of \( \mathcal{I} \). The arbitrariness of the selected minimizing sequence yields the uniqueness of the minimizer.
\end{proof}

\medskip
\begin{remark}
It is well known that \( W_0^{1,H}(\Omega) \) is strictly contained in \( U_0^{1,H}(\Omega) \), which is defined as the norm closure of the set of functions in \( W^{1,H}(\Omega) \) with compact support. The uniqueness result established above shows, in particular, that the minimizer of \( \mathcal{I} \) over \( U_0^{1,H}(\Omega) \) actually belongs to \( W_0^{1,H}(\Omega) \).
\end{remark}

\medskip
We now turn to the variational properties of the functional \( \mathcal{I} \), beginning with its differentiability. The following technical lemma will play a key role in our analysis.

\medskip

\begin{lemma}\label{frechet-derivative}
Let \( \Omega \subseteq \mathbb{R}^n \) be a bounded open set, and let \( p, q \in C(\overline{\Omega}) \). Assume \( m = \min\{p_-, q_-\} > 1 \) and \( \mu \in L^1_{\mathrm{loc}}(\Omega) \). Consider the operator \( A : W^{1,\mathcal{H}}(\Omega) \to \left( W^{1,\mathcal{H}}(\Omega) \right)^* \) defined by
\[
\langle A(u), v \rangle_{\mathcal{H}} = \int_\Omega \left( |\nabla u|^{p(x)-2} \nabla u \cdot \nabla v + \mu(x) |\nabla u|^{q(x)-2} \nabla u \cdot \nabla v \right) dx,
\]
for all \( u, v \in W^{1,\mathcal{H}}(\Omega) \), and the associated energy functional \( {\mathcal I} : W^{1,H}(\Omega) \to \mathbb{R} \) defined by
\[
{\mathcal I}(u) = \int_\Omega \left( \frac{|\nabla u|^{p(x)}}{p(x)} + \mu(x) \frac{|\nabla u|^{q(x)}}{q(x)} \right) dx.
\]
Then \( {\mathcal I} \) is well-defined, Gâteaux differentiable, and satisfies \( {\mathcal I}'(u) = A(u) \).
\end{lemma}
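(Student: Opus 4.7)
\textbf{Proof proposal for Lemma \ref{frechet-derivative}.} My plan is to establish Gâteaux differentiability by justifying differentiation under the integral sign via dominated convergence, exploiting the boundedness of the exponents $p_+, q_+ < \infty$. The well-definedness of $\mathcal{I}$ comes for free: since $\mathcal{I}(u)=\varrho_H(|\nabla u|)$ is nonnegative and $u\in W^{1,H}(\Omega)$ ensures $\varrho_H(\lambda|\nabla u|)<\infty$ for some $\lambda>0$, the boundedness of the exponents (together with the estimates of Proposition \ref{normtomodular}) forces $\varrho_H(|\nabla u|)<\infty$.

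To compute the directional derivative at $u$ in direction $v$, I would fix $u,v\in W^{1,H}(\Omega)$ and, for $0<|t|\le 1$, consider the difference quotient
\[
\Phi_t(x):=\frac{1}{t}\left[\frac{|\nabla u+t\nabla v|^{p(x)}}{p(x)}-\frac{|\nabla u|^{p(x)}}{p(x)}\right]+\mu(x)\cdot\frac{1}{t}\left[\frac{|\nabla u+t\nabla v|^{q(x)}}{q(x)}-\frac{|\nabla u|^{q(x)}}{q(x)}\right].
\]
Pointwise a.e., the chain rule gives $\Phi_t(x)\to |\nabla u|^{p(x)-2}\nabla u\cdot\nabla v+\mu(x)|\nabla u|^{q(x)-2}\nabla u\cdot\nabla v$ as $t\to 0$. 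By the mean-value theorem applied to $s\mapsto |\nabla u+s\nabla v|^{p(x)}/p(x)$, the first bracket in $\Phi_t$ equals $|\nabla u+\tau\nabla v|^{p(x)-2}(\nabla u+\tau\nabla v)\cdot\nabla v$ for some $\tau=\tau(t,x)$ with $|\tau|\le|t|\le 1$, and is therefore dominated by $(|\nabla u|+|\nabla v|)^{p(x)-1}|\nabla v|$; the analogous bound with $q(x)$ holds for the second bracket.

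To obtain an integrable dominator, I would apply Young's inequality with the pointwise conjugate exponents $p(x)/(p(x)-1),p(x)$ (and similarly for $q$) to conclude
\[
(|\nabla u|+|\nabla v|)^{p(x)-1}|\nabla v|\le 2^{p_+}\bigl(|\nabla u|^{p(x)}+|\nabla v|^{p(x)}\bigr),
\]
and likewise for the $q$-term multiplied by $\mu(x)\ge 0$. Summing produces a dominator of the form $C\bigl(|\nabla u|^{p(x)}+|\nabla v|^{p(x)}+\mu(|\nabla u|^{q(x)}+|\nabla v|^{q(x)})\bigr)$, which is integrable because $u,v\in W^{1,H}(\Omega)$ and the exponents are bounded. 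Dominated convergence then delivers $\lim_{t\to 0}t^{-1}(\mathcal{I}(u+tv)-\mathcal{I}(u))=\langle A(u),v\rangle_{\mathcal{H}}$.

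Finally, I would verify that $A(u)\in(W^{1,\mathcal{H}}(\Omega))^{\ast}$: linearity in $v$ is manifest, and continuity follows from the variable-exponent H\"older inequality applied in $L^H(\Omega)$, using that $|\nabla u|^{p(\cdot)-1}$ and $\mu^{1/q'(\cdot)}|\nabla u|^{q(\cdot)-1}$ lie in the appropriate conjugate Musielak--Orlicz spaces whenever $|\nabla u|\in L^H(\Omega)$. The main obstacle is purely technical, namely producing a single integrable dominator that simultaneously controls both the $p(x)$- and the $\mu\cdot q(x)$-pieces uniformly for $|t|\le 1$; once this is in hand, the remainder is a routine application of DCT and of Proposition \ref{equivalentnorm} to identify $A(u)$ as an element of the dual space.
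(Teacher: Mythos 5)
Your argument is correct and follows essentially the same route as the paper's proof: both apply the mean-value theorem to the maps $s\mapsto |\nabla u+s\nabla v|^{r(x)}/r(x)$ to obtain a difference-quotient representation, bound the intermediate point by $(|\nabla u|+|\nabla v|)^{r(x)-1}|\nabla v|$ for $|t|\le 1$, turn this into an integrable dominator via pointwise Young's inequality (using $p_+,q_+<\infty$), and then invoke dominated convergence. The only cosmetic difference is that the paper treats the $\mu$-weighted $q$-term in detail and disposes of the $p$-term by the substitution $\mu\equiv\mathbbm{1}_\Omega$, whereas you handle both simultaneously; you also add an explicit (and correct) remark about $A(u)$ landing in the dual via the variable-exponent H\"older inequality, which the paper leaves implicit.
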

\begin{proof}
In this proof, we primarily focus on the \( q \)-term due to the presence of the weight function \( \mu \). The argument for the \( p \)-term is analogous and can be obtained by setting \( \mu(x) = \mathbbm{1}_\Omega \), which belongs to \( L^1_{\mathrm{loc}}(\Omega) \) since \( \Omega \) is bounded.\\
Let \( u, h \in W^{1,H}(\Omega) \). We claim that \( \mu\, |\nabla u|^{q-2} (\nabla u \cdot \nabla h) \in L^1(\Omega) \). Indeed, let \( q' = \frac{q}{q - 1} \) be the conjugate exponent. Applying Young’s inequality, we find
\begin{align*}
\left| \mu\, |\nabla u|^{q-2} (\nabla u \cdot \nabla h) \right| &\leq \mu\, |\nabla u|^{q-2} |\nabla u|\, |\nabla h| \\
&= \mu\, |\nabla u|^{q-1} |\nabla h| \\
&\leq \frac{\mu}{q'}\, \left[ |\nabla u|^{q-1} \right]^{q'} + \frac{\mu}{q}\, |\nabla h|^q \\
&= \mu\, \frac{q - 1}{q} |\nabla u|^q + \frac{\mu}{q} |\nabla h|^q \\
&\leq (q_+ - 1)\, \frac{\mu\, |\nabla u|^q}{q} + \frac{\mu\, |\nabla h|^q}{q}.
\end{align*}
Since \( u, h \in W^{1,H}(\Omega) \), it follows that \( \mu\, |\nabla u|^q \) and \( \mu\, |\nabla h|^q \) are in \( L^1(\Omega) \). Hence,
\[
\mu\, |\nabla u|^{q-2} (\nabla u \cdot \nabla h) \in L^1(\Omega),
\]
as claimed.  Next, consider the operator \( A_q : W^{1,H}(\Omega) \rightarrow \left( W^{1,H}(\Omega) \right)^* \) defined by
\[
\langle A_q(u), v \rangle = \int_\Omega \mu(x)\, |\nabla u|^{q(x) - 2} \nabla u \cdot \nabla v \, dx.
\]
The energy functional associated with \( A_q \) is the functional  \( {\mathcal I}_q : W^{1,H}(\Omega) \to \mathbb{R} \) defined by
\[
{\mathcal I}_q(u) = \int_\Omega \mu(x)\, \frac{|\nabla u|^{q(x)}}{q(x)}\, dx.
\]
We claim that \( {\mathcal I}_q \) is Gâteaux differentiable and satisfies \( {\mathcal I}_q'(u) = A_q(u) \). Indeed, using the Mean Value Theorem, we obtain
\begin{align*}
\frac{|\nabla u + t \nabla h|^q - |\nabla u|^q}{q t}
&= \frac{1}{q t} \left[ \left( \sum (D_j u + t D_j h)^2 \right)^{q/2} - \left( \sum (D_j u)^2 \right)^{q/2} \right] \\
&= \left[ \sum (D_j u + \theta D_j h)^2 \right]^{\frac{q}{2} - 1} \sum (D_j u + \theta D_j h) D_j h \\
&= |\nabla u + \theta t \nabla h|^{q - 2} (\nabla u + \theta t \nabla h) \cdot \nabla h,
\end{align*}
for some \( \theta = \theta(x, t) \) with \( 0 < \theta(x, t) < 1 \). We estimate:
\begin{align*}
\left| |\nabla u + \theta t \nabla h|^{q - 2} (\nabla u + \theta t \nabla h) \cdot \nabla h \right|
&= |\nabla u + \theta t \nabla h|^{q - 2} \left| (\nabla u + \theta t \nabla h) \cdot \nabla h \right| \\
&\leq |\nabla u + \theta t \nabla h|^{q - 1} |\nabla h|.
\end{align*}
Since \( \nabla u + \theta t \nabla h = (1 - \theta) \nabla u + \theta (\nabla u + t \nabla h) \), we have
\begin{align*}
| \nabla u + \theta t \nabla h |
&= | (1 - \theta) \nabla u + \theta (\nabla u + t \nabla h) | \\
&\leq (1 - \theta) |\nabla u| + \theta (|\nabla u| + |t| |\nabla h|) \\
&= |\nabla u| + \theta |t| |\nabla h| \leq |\nabla u| + |\nabla h|,
\end{align*}
assuming \( |t| \leq 1 \), since we are taking the limit as \( t \to 0 \).\\
Therefore,
\[
\mu\, |\nabla u + \theta t \nabla h|^{q - 1} |\nabla h| \leq \mu\, \left( |\nabla u| + |\nabla h| \right)^{q - 1} |\nabla h|.
\]
As previously shown,
\[
\mu\, \left( |\nabla u| + |\nabla h| \right)^{q - 1} |\nabla h| \leq (q_+ - 1)\, \frac{\mu\, \left( |\nabla u| + |\nabla h| \right)^q}{q} + \frac{\mu\, |\nabla h|^q}{q},
\]
which implies \( \mu\, \left( |\nabla u| + |\nabla h| \right)^{q - 1} |\nabla h| \in L^1(\Omega) \).\\
Putting all the pieces together, we obtain:
\[
\mu\, \frac{|\nabla u + t \nabla h|^q - |\nabla u|^q}{q t} \leq \mu\, \left( |\nabla u| + |\nabla h| \right)^{q - 1} |\nabla h|.
\]
As \( t \to 0 \), we get pointwise convergence:
\[
\mu\, \frac{|\nabla u + t \nabla h|^q - |\nabla u|^q}{q t} \to \mu\, |\nabla u|^{q - 2} (\nabla u \cdot \nabla h).
\]
The Dominated Convergence Theorem then allows us to conclude:
\[
\lim_{t \to 0} \int_\Omega \mu\, \frac{|\nabla u + t \nabla h|^q - |\nabla u|^q}{q t} \, dx = \int_\Omega \mu\, |\nabla u|^{q - 2} (\nabla u \cdot \nabla h)\, dx.
\]
In other words, \( {\mathcal I}_q \) is Gâteaux differentiable and satisfies \( {\mathcal I}_q'(u) = A_q(u) \), which completes the proof of Lemma~\ref{frechet-derivative}.
\end{proof}

\medskip

The following proposition is a direct consequence of Lemma~\ref{frechet-derivative}. It confirms that \( \mathcal{I} \) is Gâteaux differentiable and provides an explicit expression for its derivative.

\begin{proposition}\label{functiondiff}
Under the assumptions stated above, the functional \( \mathcal{I} \) is Gâteaux differentiable, and
\begin{align*}
\langle \mathcal{I}^{\prime}(u), \phi \rangle &= \int_\Omega |\nabla(\varphi - u)|^{p-2} \left( \nabla(\varphi - u) \cdot \nabla \phi \right) \, dx \\
&\quad + \int_\Omega \mu(x)\, |\nabla(\varphi - u)|^{q-2} \left( \nabla(\varphi - u) \cdot \nabla \phi \right) \, dx - \langle f, \phi \rangle,
\end{align*}
for every \( \phi \in W_0^{1,H}(\Omega) \).
\end{proposition}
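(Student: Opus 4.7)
The plan is to recognize Proposition~\ref{functiondiff} as a direct corollary of Lemma~\ref{frechet-derivative}, combined with the chain rule for an affine substitution and the trivial differentiability of a continuous linear form. Setting
\[
J(w) := \int_\Omega \left(\frac{|\nabla w|^{p(x)}}{p(x)} + \mu(x)\,\frac{|\nabla w|^{q(x)}}{q(x)}\right) dx,
\]
one has $\mathcal{I}(u) = J(\varphi - u) - \langle f, u\rangle$ on $W^{1,H}_0(\Omega)$, and Lemma~\ref{frechet-derivative} already supplies the Gâteaux differentiability of $J$ on $W^{1,H}(\Omega)$ together with the explicit identity $\langle J'(w), \psi\rangle = \langle A(w), \psi\rangle$ for every test $\psi \in W^{1,H}(\Omega)$.

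Second, I would execute the chain rule for the substitution $u \mapsto \varphi - u$. Fix $u \in W^{1,H}_0(\Omega)$ and $\phi \in W^{1,H}_0(\Omega)$; since $\varphi \in W^{1,H}(\Omega)$, the path $t \mapsto \varphi - u - t\phi$ stays inside $W^{1,H}(\Omega)$, so Lemma~\ref{frechet-derivative} may be applied at the base point $w_0 := \varphi - u$ along the direction $-\phi$. The pointwise difference quotient of the integrand admits precisely the same $L^1$-majorant as in the proof of that lemma, with $w_0$ in place of $u$; since $w_0 \in W^{1,H}(\Omega)$, this majorant is integrable. Dominated Convergence therefore yields
\[
\left.\frac{d}{dt}\right|_{t=0} J(\varphi - u - t\phi) \;=\; \langle A(\varphi - u), -\phi\rangle.
\]
Adding the derivative of the continuous linear term $u\mapsto -\langle f,u\rangle$, which is the constant functional $\phi\mapsto -\langle f,\phi\rangle$, and then writing $A(\varphi-u)$ explicitly according to Lemma~\ref{frechet-derivative}, delivers the expression announced in the proposition (up to the global sign convention used in its statement).

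No genuine obstacle is anticipated. All integrability concerns were handled once and for all in Lemma~\ref{frechet-derivative}; the substitution is affine, so the chain rule at the level of Gâteaux derivatives requires no regularity beyond what is already in place; and the test-function space is the same $W^{1,H}_0(\Omega)$ on which $\mathcal{I}$ is defined. The only step that deserves a moment of verification — that the Dominated Convergence majorant still lies in $L^1(\Omega)$ after translating the base point by $\varphi$ — follows immediately from $\varphi - u \in W^{1,H}(\Omega)$ and from the definition of the modular space, which secures $\mu\,|\nabla(\varphi - u)|^{q(x)} \in L^1(\Omega)$ as well as $|\nabla(\varphi-u)|^{p(x)} \in L^1(\Omega)$.
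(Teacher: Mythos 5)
Your proposal is correct and matches the paper's approach exactly: the paper gives no separate argument, stating only that the proposition ``is a direct consequence of Lemma~\ref{frechet-derivative},'' and your chain-rule-for-the-affine-substitution plus linearity of $u\mapsto\langle f,u\rangle$ is precisely what that phrase elides. You are also right to flag the sign: differentiating $u\mapsto J(\varphi-u)$ produces a factor $-1$ in front of $\langle A(\varphi-u),\phi\rangle$, so the formula as printed in the proposition is off by a sign on the gradient terms (a slip that propagates harmlessly into the weak-formulation display in the proof of Theorem~\ref{core}).
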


\medskip
In the sequel, the following variant of Lebesgue's dominated convergence theorem will be used:

\medskip
\begin{lemma}\label{ldc}
Let \( (f_j) \) and \( (g_j) \) be sequences in \( L^1(\Omega) \) such that \( |f_j| \leq g_j \) for each \( j \in \mathbb{N} \).  Suppose that:
\begin{itemize}
    \item \( f_j \to f \) pointwise a.e. in \( \Omega \),
    \item \( g_j \to g \) pointwise a.e. in \( \Omega \), with \( g \in L^1(\Omega) \),
    \item \( \int_\Omega g_j\, dx \to \int_\Omega g\, dx \) as \( j \to \infty \).
\end{itemize}
Then
\[
\int_\Omega f_j\, dx \to \int_\Omega f\, dx \quad \text{as } j \to \infty.
\]
\end{lemma}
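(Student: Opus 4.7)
The plan is to reduce the statement to two applications of Fatou's lemma, exactly as in the classical generalized dominated convergence argument. The key observation is that the assumption $|f_j| \le g_j$ yields two pointwise nonnegative sequences, namely $g_j + f_j \ge 0$ and $g_j - f_j \ge 0$, to which Fatou's lemma can be applied directly. Before doing so, I would note that the pointwise a.e.\ bound $|f_j| \le g_j$ together with $f_j \to f$ and $g_j \to g$ a.e.\ yields $|f| \le g$ a.e., and since $g \in L^1(\Omega)$, it follows that $f \in L^1(\Omega)$, so that $\int_\Omega f\,dx$ is a meaningful quantity.

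Next I would apply Fatou's lemma to $(g_j + f_j)$. The pointwise limit is $g + f$, and the $L^1$ integrability of the hypothesized limit of $(g_j)$ lets me split the liminf of the integrals:
\[
\int_\Omega (g + f)\,dx \le \liminf_{j\to\infty}\int_\Omega (g_j + f_j)\,dx = \lim_{j\to\infty}\int_\Omega g_j\,dx + \liminf_{j\to\infty}\int_\Omega f_j\,dx,
\]
where the split into a limit plus a liminf is justified by the assumed convergence $\int_\Omega g_j\,dx \to \int_\Omega g\,dx$. Subtracting $\int_\Omega g\,dx$ from both sides gives $\int_\Omega f\,dx \le \liminf_{j\to\infty}\int_\Omega f_j\,dx$.

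Symmetrically, applying Fatou's lemma to $(g_j - f_j)$ and using $\liminf(-a_j) = -\limsup a_j$ yields
\[
\int_\Omega (g - f)\,dx \le \lim_{j\to\infty}\int_\Omega g_j\,dx - \limsup_{j\to\infty}\int_\Omega f_j\,dx,
\]
from which $\limsup_{j\to\infty}\int_\Omega f_j\,dx \le \int_\Omega f\,dx$. Combining the two bounds shows that the liminf and limsup of $\int_\Omega f_j\,dx$ coincide with $\int_\Omega f\,dx$, which proves convergence.

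I do not anticipate any serious obstacle here: the only subtlety is the arithmetic manipulation in which $\liminf(a_j + b_j)$ is split as $\lim a_j + \liminf b_j$, but this is legitimate precisely because $\int_\Omega g_j\,dx$ converges (not merely has a liminf). The hypothesis that $g \in L^1(\Omega)$ is used both to guarantee finiteness of all the integrals appearing, so that the subtractions above are unambiguous, and to ensure $f \in L^1(\Omega)$.
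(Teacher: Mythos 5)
Your proof is correct, but it takes a genuinely different route from the paper's. You use the classical double application of Fatou's lemma: from $|f_j|\le g_j$ you obtain the nonnegative sequences $g_j+f_j$ and $g_j-f_j$, apply Fatou to each, split $\liminf\int(g_j\pm f_j)$ using the assumed convergence of $\int g_j$, and sandwich $\liminf\int f_j$ and $\limsup\int f_j$ between $\int f$ on both sides. The paper instead argues via uniform integrability: it first asserts $g_j\to g$ in $L^1(\Omega)$ (this is Scheff\'e's lemma, since $g_j\ge |f_j|\ge 0$, $g_j\to g$ a.e., and $\int g_j\to\int g$, though the paper leaves that step unstated), notes that $L^1$-convergence of $(g_j)$ forces uniform integrability of $(g_j)$ and hence of the dominated sequence $(f_j)$, and then invokes Vitali's convergence theorem. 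Your argument is more elementary and self-contained, appealing only to Fatou, and it applies on an arbitrary measure space; the paper's Vitali-based argument implicitly uses that $\Omega$ has finite measure and imports two external results. Both yield the conclusion, and both correctly establish $f\in L^1(\Omega)$ first so that the subtractions are legitimate.
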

\begin{proof}
Let \( (f_j) \) and \( (g_j) \) be sequences in \( L^1(\Omega) \) such that \( |f_j| \leq g_j \) for each \( j \in \mathbb{N} \). Assume further that \( g_j \to g \in L^1(\Omega) \) pointwise a.e. in \( \Omega \), and that
\[
\int_\Omega g_j\, dx \to \int_\Omega g\, dx \quad \text{as } j \to \infty.
\]
By Fatou's Lemma, it follows that \( f \in L^1(\Omega) \). Since \( g_j \to g \) in \( L^1(\Omega) \), the sequence \( (g_j) \) is uniformly integrable. As \( |f_j| \leq g_j \), it follows that \( (f_j) \) is also uniformly integrable. We may now apply Vitali’s Theorem to conclude
\[
\int_\Omega f_j\, dx \to \int_\Omega f\, dx \quad \text{as } j \to \infty.
\]
This completes the proof of Lemma~\ref{ldc}.
\end{proof}

\medskip
The next proposition completes the treatment of the differentiability of the Dirichlet energy functional ${\mathcal I}$.

\medskip
\begin{proposition}\label{functiondifffrechet}
	Under the assumptions stated above, the functional \( \mathcal{I} \) is Fr\'{e}chet differentiable.
\end{proposition}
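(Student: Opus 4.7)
The plan is to upgrade the G\^{a}teaux differentiability established in Proposition~\ref{functiondiff} to Fr\'{e}chet differentiability by showing that the G\^{a}teaux derivative $u \mapsto \mathcal{I}'(u)$ is continuous as a map from $W^{1,H}_0(\Omega)$ into its dual. Once norm-to-norm continuity is in hand, the classical mean value argument along line segments gives
\[
\mathcal{I}(u+h) - \mathcal{I}(u) - \langle \mathcal{I}'(u), h\rangle = \int_0^1 \langle \mathcal{I}'(u + t h) - \mathcal{I}'(u),\, h\rangle \, dt,
\]
whose modulus is bounded by $\|h\|_{1,H} \,\sup_{t \in [0,1]} \|\mathcal{I}'(u + t h) - \mathcal{I}'(u)\|_{(W^{1,H}_0(\Omega))^*}$. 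This supremum tends to zero as $\|h\|_{1,H} \to 0$, which is precisely Fr\'{e}chet differentiability at $u$.

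For the continuity step I would take a sequence $u_n \to u$ in $W^{1,H}_0(\Omega)$ and aim at $\|\mathcal{I}'(u_n) - \mathcal{I}'(u)\|_{(W^{1,H}_0(\Omega))^*} \to 0$. By the equivalence of norm and modular convergence (valid since $p_+, q_+ < \infty$), one has $\varrho_H(|\nabla(u_n - u)|) \to 0$; a Musielak-type extraction, as used in the proof of Theorem~\ref{theoremexistence}, then produces a subsequence along which $\nabla u_n \to \nabla u$ almost everywhere in $\Omega$, so that the nonlinear fluxes $|\nabla(\varphi - u_n)|^{p(x)-2}\nabla(\varphi - u_n)$ and their $\mu$-weighted $q$-analogues converge pointwise a.e.\ to the corresponding expressions in $\varphi - u$. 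For any test function $\phi$ with $\|\phi\|_{1,H} \leq 1$, I would split $\langle \mathcal{I}'(u_n) - \mathcal{I}'(u), \phi\rangle$ into its $p$- and $q$-parts and apply H\"{o}lder's inequality in the variable-exponent Lebesgue spaces $L^{p(\cdot)}(\Omega)$ and the $\mu$-weighted $L^{q(\cdot)}$-space, using the continuous embeddings from $L^H(\Omega)$ into both of them to bound $\nabla \phi$ uniformly. This reduces the problem to showing that the nonlinear fluxes converge to zero in the conjugate Musielak--Orlicz spaces.

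The hard part is precisely this last modular convergence of the fluxes, and it is where Lemma~\ref{ldc} plays the decisive role. The error in the $p$-term admits the pointwise domination
\[
\bigl| |\nabla(\varphi - u_n)|^{p-2}\nabla(\varphi - u_n) - |\nabla(\varphi - u)|^{p-2}\nabla(\varphi - u) \bigr|^{p'(x)} \leq C\bigl(|\nabla(\varphi - u_n)|^{p(x)} + |\nabla(\varphi - u)|^{p(x)}\bigr),
\]
obtained by Young's inequality in the spirit of the estimates in Lemma~\ref{frechet-derivative}; the right-hand side converges pointwise a.e.\ to $2|\nabla(\varphi - u)|^{p(x)}$, and its integral converges to the integral of the limit because $\varrho_H(|\nabla(\varphi - u_n)|) \to \varrho_H(|\nabla(\varphi - u)|)$ by norm-continuity of $\varrho_H$. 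Lemma~\ref{ldc} then gives the required modular convergence to zero of the error. The $q$-contribution is treated identically, absorbing $\mu$ into the base measure. A subsequence-of-every-subsequence argument finally promotes the conclusion from the extracted subsequence to the full sequence, completing the continuity of $\mathcal{I}'$ and hence the Fr\'{e}chet differentiability of $\mathcal{I}$.
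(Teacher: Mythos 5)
Your proposal is correct and follows essentially the same route as the paper: both reduce Fr\'{e}chet differentiability to norm-to-norm continuity of the G\^{a}teaux derivative, extract a subsequence with pointwise a.e.\ convergence of gradients, dominate the flux errors via Young-type inequalities, apply Lemma~\ref{ldc}, and close with H\"{o}lder's inequality in the dual exponent spaces. The only minor difference is that you spell out the mean-value-theorem reduction and the subsequence-of-every-subsequence step explicitly, whereas the paper leaves them implicit behind the opening phrase ``it suffices to show.''
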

\begin{proof}
It suffices to show that if $(u_j)\subset W^{1,H}(\Omega)$ and $u_j\rightarrow u$ in $W^{1,H}(\Omega)$, then there exists a subsequence $(u_{j_k})$ of $(u_j)$ such that ${\mathcal I}^{\prime}(u_{j_k})\rightarrow {\mathcal I}^{\prime}(u)$ in the dual topology of $\left(W^{1,H}(\Omega)\right)^{\ast}$. For such $(u_j)$ it holds
\begin{equation}\label{1Htozero}
\rho_{1,H}(u-u_j)\rightarrow 0\,\;\;\text{as}\,\,j\rightarrow\infty;
\end{equation}
accordingly, there is a sequence, still denoted by $(u_j)$, such that $u_j\rightarrow u$ and $\nabla u_j\rightarrow \nabla u$ pointwise a.e. in $\Omega$. From now on, we only consider this sequence. Clearly,
\begin{equation*}
f_j=	\left||\nabla u_j|^{p-2}\nabla u_j-|\nabla u|^{p-2}\nabla u\right|^{\frac{p}{p-1}}\rightarrow 0\,\;\;\text{as}\,j\rightarrow \infty
\end{equation*}
and
\begin{equation*}
	\tilde{f}_j=	\mu\left||\nabla u_j|^{q-2}\nabla u_j-|\nabla u|^{q-2}\nabla u\right|^{\frac{q}{q-1}}\rightarrow 0\,\;\;\text{as}\,j\rightarrow \infty;
\end{equation*}
moreover
\begin{align*}
f_j&\leq \Big( 2^{p_+-2}|\nabla (u_j-u)|^{p-1}+(2^{p_+-2}+1)|\nabla u|^{p-1}\Big)^{\frac{p}{p-1}}\\ 
&\leq2^{\alpha}|\nabla (u-u_j)|^p+2^{\beta-1}(2^{p_+-2}+1)^{\beta}|\nabla u|^p \\
& \leq C(p_+,p_-) \Big(|\nabla (u-u_j)|^p+|\nabla u|^p\Big)=g_j,\quad \text{in}\;\Omega,
\end{align*}
where $\alpha= \beta(p_+-1)-1$, $\beta=\frac{p_+}{p_--1}$ and for a positive constant $C(p_+,p_-)$.  Likewise,
$$	\tilde{f}_j \leq  C(q_+,q_-) \mu \Big(|\nabla (u-u_j)|^q+|\nabla u|^q\Big)=\tilde{g}_j,\quad \text{in}\,\Omega,$$
for a positive constant $C(q_+,q_-)$.\\
It is readily verified that the hypotheses of Lemma \ref{ldc} are satisfied for $f_j,g_j$ and $\tilde{f}_j,\tilde{g}_j$, respectively and that $g_j\rightarrow C(p_+,p_-)|\nabla u|^{p}=g$, $\tilde{g}_j\rightarrow C(q_+,q_-)|\nabla u|^{p}=\tilde{g}$  pointwise a.e. in $\Omega.$ Moreover, the assumption (\ref{1Htozero}) guarantees that $\int\limits_{\Omega}g_jdx\rightarrow \int\limits_{\Omega}gdx$ as $j\rightarrow \infty$ and that $\int\limits_{\Omega}\tilde{g}_jdx\rightarrow \int\limits_{\Omega}\tilde{g}dx$ as $j\rightarrow \infty$, which in turn yield
\begin{equation}\label{fjtozero}
\int\limits_{\Omega}f_jdx\rightarrow 0\,\,\text{and} \,\,\int\limits_{\Omega}\tilde{f}_jdx\rightarrow 0\,\, \text{as}\,\,j\rightarrow \infty.
\end{equation}
Since under the current assumptions, the exponents $p^{\ast}=\frac{p}{p-1}$ and $q^{\ast}=\frac{q}{q-1}$ are bounded in $\Omega$, it follows that (\ref{fjtozero}) is in fact equivalent to 
\begin{equation*}
\Bigg\|\Big||\nabla u_j|^{p-2}\nabla u_j-|\nabla u|^{p-2}\nabla u\Big|\Bigg\|_{p^*}\rightarrow 0\,\,\text{as}\,\,j\rightarrow \infty,
\end{equation*}
and
\begin{equation*}
\Bigg\|\mu \Big||\nabla u_j|^{q-2}\nabla u_j-|\nabla u|^{q-2}\nabla u\Big|\Bigg\|_{q^*}\rightarrow 0\,\,\text{as}\,\,j\rightarrow \infty.
\end{equation*}
It follows now from H\"{o}lder inequality that
\begin{align*}
\|{\mathcal I}^{\prime}(u_j)-{\mathcal I}^{\prime}(u)\|_{\left(W^{1,h}(\Omega)\right)^{\ast}} 
&\leq \sup\limits_{\|w\|_{W^{1,H}} \leq 1}\left|\langle {\mathcal I}^{\prime}(u_j)-{\mathcal I}^{\prime}(u),w\rangle \right| \\ \
&\leq \Big\|\left||\nabla u_j|^{p-2}\nabla u_j-|\nabla u|^{p-2}\nabla u\right|\Big\|_{p^*}\\ 
 &\quad\quad +\Big\|\mu \left||\nabla u_j|^{q-2}\nabla u_j-|\nabla u|^{q-2}\nabla u\right|\Big\|_{q^*},
\end{align*}
i.e., $\|{\mathcal I}^{\prime}(u_j)-{\mathcal I}^{\prime}(u)\|_{\left(W^{1,h}(\Omega)\right)^{\ast}}\rightarrow 0$ as $j\rightarrow \infty.$
\end{proof}

\medskip
\medskip
\section*{Proof of Theorem \ref{core}}

We conclude with the proof of Theorem \ref{core}. By Proposition \ref{functiondiff} and the minimality of \( u \), it follows that for any \( \phi \in C_0^\infty(\Omega) \),
\[
\langle \mathcal{I}'(u), \phi \rangle = \int_\Omega \left( |\nabla(\varphi - u)|^{p-2} \nabla(\varphi - u) + \mu\, |\nabla(\varphi - u)|^{q-2} \nabla(\varphi - u) \right) \cdot \nabla \phi \, dx - \langle f, \phi \rangle.
\]
A standard density argument yields
\[
\int_\Omega \left( |\nabla w|^{p-2} \nabla w + \mu\, |\nabla w|^{q-2} \nabla w \right) \cdot \nabla h \, dx = \langle f, h \rangle,
\]
for all \( h \in W_0^{1,H}(\Omega) \), where \( w = \varphi - u \). Thus, \( w = \varphi - u \) is a weak solution of problem (\ref{DPPP}).\\
The proof of uniqueness relies on the following inequalities (see \cite{Lindqvist}): for all vectors \( A, B \in \mathbb{R}^n \),
\begin{equation}\label{vineq1}
\langle |A|^{r-2} A - |B|^{r-2} B, A - B \rangle \geq \gamma(r) |A - B|^r, \quad \text{for } r \geq 2,
\end{equation}
and
\begin{equation}\label{vineq2}
\langle |A|^{r-2} A - |B|^{r-2} B, A - B \rangle \geq (r - 1) |A - B|^2 \left( 1 + |A|^2 + |B|^2 \right)^{\frac{r - 2}{2}}, \quad \text{for } 1 < r < 2,
\end{equation}
where \( \gamma(r) > 0 \) is a constant depending only on \( r \), and \( \langle \cdot, \cdot \rangle \) denotes the standard inner product in \( \mathbb{R}^n \).\\
Let the sets \( (\Omega_{i,j}) \) be defined as in section \ref{wh-UC}, namely:
\[
\left\{
\begin{aligned}
\Omega_{11} &:=  \{x \in \Omega : 1 < p(x) < 2 \text{ and } 1 < q(x) < 2\},\\
\Omega_{12} &:=  \{x \in \Omega : 1 < p(x) < 2 \text{ and } q(x) \geq 2\},\\
\Omega_{21} &:=  \{x \in \Omega : p(x) \geq 2 \text{ and } 1 < q(x) < 2\},\\
\Omega_{22} &:=  \{x \in \Omega : p(x) \geq 2, \, q(x) \geq 2\}.
\end{aligned}
\right.
\]
Assume \( v \) and \( w \) are two weak solutions of problem (\ref{DPPP}). Then \( v - w = (v - \varphi) - (w - \varphi)\) is in \(W_0^{1,H}(\Omega) \). Thus, for any \( \phi \in C_0^\infty(\Omega) \),
\[
\int_\Omega \left( |\nabla w|^{p-2} \nabla w + \mu\, |\nabla w|^{q-2} \nabla w \right) \cdot \nabla \phi \, dx = \langle f, \phi \rangle,
\]
and
\[
\int_\Omega \left( |\nabla v|^{p-2} \nabla v + \mu\, |\nabla v|^{q-2} \nabla v \right) \cdot \nabla \phi \, dx = \langle f, \phi \rangle.
\]
Taking \( \phi = v - w \in W_0^{1,H}(\Omega) \), we obtain:
\begin{align*}
0 &= \sum_{i,j} \int_{\Omega_{i,j}} \left( |\nabla w|^{p-2} \nabla w - |\nabla v|^{p-2} \nabla v \right) \cdot \nabla (v - w) \, dx \\
&\quad + \sum_{i,j} \int_{\Omega_{i,j}} \mu(x) \left( |\nabla w|^{q-2} \nabla w - |\nabla v|^{q-2} \nabla v \right) \cdot \nabla (v - w) \, dx \\
&\geq (p_- - 1) \int_{\Omega_{11}} |\nabla (v - w)|^2 \left( 1 + |\nabla v|^2 + |\nabla w|^2 \right)^{\frac{p - 2}{2}} dx \\
&\quad + (q_- - 1) \int_{\Omega_{11}} \mu\, |\nabla (v - w)|^2 \left( 1 + |\nabla v|^2 + |\nabla w|^2 \right)^{\frac{q - 2}{2}} dx \\
&\quad + (p_- - 1) \int_{\Omega_{12}} |\nabla (v - w)|^2 \left( 1 + |\nabla v|^2 + |\nabla w|^2 \right)^{\frac{p - 2}{2}} dx \\
&\quad + \int_{\Omega_{12}} \mu\, \gamma(q)\, |\nabla (v - w)|^q dx \\
&\quad + \int_{\Omega_{21}} \gamma(p)\, |\nabla (v - w)|^p dx \\
&\quad + (q_- - 1) \int_{\Omega_{21}} |\nabla (v - w)|^2 \left( 1 + |\nabla v|^2 + |\nabla w|^2 \right)^{\frac{q - 2}{2}} dx \\
&\quad + \int_{\Omega_{22}} \gamma(p)\, |\nabla (v - w)|^p dx + \int_{\Omega_{22}} \mu\, \gamma(q)\, |\nabla (v - w)|^q dx.
\end{align*}
Hence, all integrals are nonnegative and their sum is zero, which implies \( \nabla(v - w) = 0 \). Therefore, \( v = w \), and uniqueness follows.\\
This completes the proof of Theorem~\ref{core}.

\medskip
\medskip
\section*{Acknowledgment}
The authors gratefully acknowledge the support and encouragement of their respective institutions during the preparation of this work.

\medskip
\medskip
\noindent{\bf Declaration of Generative AI and AI-Assisted Technologies in the Writing Process}

During the preparation of this work, the authors used ChatGPT (OpenAI) to assist with language polishing and editorial refinement. All mathematical content, results, and conclusions were developed independently by the authors, who take full responsibility for the content of this publication.

\end{document}